\documentclass[12pt,reqno]{amsart}
\usepackage{palatino,mathpazo}
\usepackage{amsmath,amssymb,mathrsfs}
\usepackage{xcolor}
\colorlet{mdtRed}{red!50!black}
\definecolor{dblue}{rgb}{0,0,.6}
\usepackage[colorlinks,pagebackref=true]{hyperref}
\hypersetup{colorlinks,linkcolor={red},citecolor={blue},urlcolor={red}}
\usepackage[all]{xy}
\usepackage{tikz,tikz-cd,tkz-graph,enumerate}
\renewcommand*{\backref}[1]{}
\renewcommand*{\backrefalt}[4]{[{%
		\ifcase #1 Not cited.%
		\or $\uparrow$~#2.%
		\else $\uparrow$~#2.%
		\fi%
	}]}

\DeclareMathOperator{\Pic}{\textnormal{Pic}}

\DeclareMathOperator{\Br}{\textnormal{Br}}
\DeclareMathOperator{\Id}{{\rm Id}}

\newcommand{\mf}[1]{\mathfrak{#1}}
\newcommand{\mc}[1]{\mathcal{#1}}
\newcommand{\bb}[1]{\mathbb{#1}}

\newtheorem{theorem}{Theorem}[section] 
\newtheorem{lemma}[theorem]{Lemma} 
\newtheorem{proposition}[theorem]{Proposition}
\newtheorem{corollary}[theorem]{Corollary}

\theoremstyle{definition}
\newtheorem{definition}[theorem]{Definition}
\newtheorem{remark}[theorem]{Remark}

\numberwithin{equation}{section}

\begin{document}
	
	\baselineskip=15.5pt 
	
	\title[Brauer group of stable parabolic $\text{SL}(r,\bb{C})$ and $\text{PGL}(r,\bb{C})$-connections and Higgs]{Brauer group of moduli of stable parabolic $\text{SL}(r,\bb{C})$ and $\text{PGL}(r,\bb{C})$-connections and Higgs bundles over a curve}

	\author[P. Adroja]{Pavan Adroja}
	
	\address{Department of Mathematics, Indian Institute of Technology Gandhinagar, Near Village Palaj, Gandhinagar-382055, Gujarat, India}
	
	\email{pavan.a@alumni.iitgn.ac.in, adrojapavan@gmail.com}
	
	\author[S. Chakraborty]{Sujoy Chakraborty}
	
	\address{Department of Mathematics, 
		Indian Institute of Science Education and Research Tirupati, Andhra Pradesh 517507, India}
	\email{sujoy.cmi@gmail.com}

	\subjclass[2010]{14D20, 14D22, 14F22, 14H60}
	
	\keywords{Brauer Group; moduli space; parabolic bundle; paraboic connection; Higgs bundle}

	\begin{abstract}
		Let $X$ be a compact Riemann surface of genus at least $3$. We compute the Brauer groups of the moduli spaces of stable parabolic $\text{SL}(r,\bb{C})$-connections and stable strongly parabolic $\text{SL}(r,\bb{C})$-Higgs bundles over $X$. We also establish an equality of the Brauer group of the moduli stack of stable parabolic $\text{PGL}(r,\bb{C})$-connections and the smooth locus of its coarse moduli space.
	\end{abstract}
	
	\maketitle
	
	\section{Introduction}
	
	The Brauer group is an important invariant of schemes and algebraic stacks. The cohomological Brauer group of an algebraic variety $Y$ is defined as the torsion subgroup of $H^2_{\mathrm{\acute{e}t}}(Y,\mathbb{G}_m)$; moreover, if $Y$ is smooth, then $H^2_{\mathrm{\acute{e}t}}(Y,\mathbb{G}_m)$ is torsion. The Brauer group of the moduli spaces measures the obstruction to the existence of a universal bundle. It also plays a central role in questions of rationality. In this article, we aim to compute the Brauer groups of certain moduli spaces.
	
	Parabolic vector bundles, introduced by V. B. Mehta and C. S. Seshadri \cite{MS}, refine classical vector bundles by incorporating weighted flags along a divisor. Parabolic connections encode linear differential equations with regular singularities compatible with these filtrations. Via the Hodge moduli space of $\lambda$-connections, introduced by Carlos Simpson, the moduli space of parabolic Higgs bundles arises as the $\lambda=0$ fiber, while the moduli space of parabolic connections appears as the $\lambda=1$ fiber, thereby relating the two spaces through a natural deformation. The Brauer groups of the moduli space of stable parabolic bundles and the moduli space of vector bundles with logarithmic connections are studied in \cite{BBS, BD}. In this article, we extend those results to the moduli space of stable parabolic connections. In a similar spirit, we compute the Brauer group of the moduli space of stable strongly parabolic Higgs bundles. Our main results are as follows. 
	
	Let $X$ be a compact Riemann surface of genus $g$ with $g\geq 3$. Choose a finite subset of points $S=\{x_1, x_2,\cdots,x_n\}$ in $X$. Fix $\xi\in \Pic(X)$. Let $\mathcal{M}_{pc}^{\boldsymbol{m,\alpha}}\left(r,\xi\right)$ denote the moduli space of stable parabolic connections with fixed determinate $\xi$, rank $r$ and parabolic data $(\boldsymbol{m,\alpha})$, where $\boldsymbol{m}$ and $\boldsymbol{\alpha}$ denote the system of multiplicities and weights, respectively (see Definition \ref{def:parabolic-bundles}). Also, suppose $\mc{M}^{\boldsymbol{m},\boldsymbol{\alpha}}_{Higgs}(r,\xi)$ denote the moduli space of strongly parabolic stable Higgs bundles having fixed determinant $\xi$, rank $r$ and parabolic data $(\boldsymbol{m,\alpha})$.
	
	\begin{theorem}[{Theorem \ref{Br of Para Conn} and Theorem \ref{Br of Higgs}}]
		Let $\boldsymbol{\alpha}$ be a generic system of weights as in Definition \ref{def:generic-weights}. Let $m_i^j$ denote the multiplicity of the flag at $x_j$. The Brauer groups of $\mathcal{M}_{pc}^{\boldsymbol{m,\alpha}}\left(r,\xi\right)$ and $\mc{M}^{\boldsymbol{m},\boldsymbol{\alpha}}_{Higgs}(r,\xi)$ are isomorphic to the cyclic group $\mathbb{Z}/\nu\mathbb{Z}$, where $$\nu=\gcd(r,d,m_1^1,m_2^1,\cdots,m_1^{\ell_1},\cdots,m_{n}^{\ell_n})\,.$$
	\end{theorem}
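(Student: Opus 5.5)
The plan is to reduce both computations to the known result for the moduli space $\mathcal{M}^{\boldsymbol{m,\alpha}}_{par}(r,\xi)$ of stable parabolic bundles with fixed determinant. By \cite{BBS} and \cite{BD}, for generic weights and $g\geq 3$ its Brauer group is $\mathbb{Z}/\nu\mathbb{Z}$, generated by the class of the projective Brauer--Severi bundle obtained from the universal projective parabolic bundle, i.e.\ the obstruction to a universal family. Two facts drive the reduction. First, the Brauer group of a smooth variety is unchanged by removing a closed subset of codimension $\geq 2$. Second, $\Br$ is invariant under pulling back along torsors for vector bundles, i.e.\ affine bundles, since these are Zariski-locally trivial with fibres $\mathbb{A}^N$.

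\textbf{Higgs case.} Let $U\subset \mc{M}^{\boldsymbol{m},\boldsymbol{\alpha}}_{Higgs}(r,\xi)$ be the open locus of pairs $(E_*,\Phi)$ whose underlying parabolic bundle $E_*$ is stable.
\begin{enumerate}
\item Show that $U$ is the total space of the vector bundle $T^*\mathcal{M}^{\boldsymbol{m,\alpha}}_{par}(r,\xi)$, since strongly parabolic Higgs fields are exactly $H^0(\mathrm{SParEnd}_0(E_*)\otimes K(S))$, which is Serre dual to $H^1(\mathrm{ParEnd}_0(E_*))$. Hence $\Br(U)\cong \Br(\mathcal{M}_{par})=\mathbb{Z}/\nu$.
\item Show that $\mathrm{codim}(\mc{M}_{Higgs}\setminus U)\geq 2$, so that $\Br(\mc{M}_{Higgs})\cong\Br(U)$. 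Smoothness of $\mc{M}_{Higgs}$ for generic $\boldsymbol{\alpha}$ is assumed from the earlier sections.
\end{enumerate}

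\textbf{Connection case.} Let $U'\subset\mathcal{M}^{\boldsymbol{m,\alpha}}_{pc}(r,\xi)$ be the locus of connections whose underlying parabolic bundle is stable.
\begin{enumerate}
\item Use the forgetful map $U'\to\mathcal{M}_{par}$. The existence of a parabolic connection on a stable parabolic bundle with the residues prescribed by the weights follows from the parabolic Weil/Atiyah criterion, which applies since $\deg$ matches the weight data. The fibres are then torsors for $H^0(\mathrm{SParEnd}_0(E_*)\otimes K(S))$, so $U'$ is an affine bundle modelled on $T^*\mathcal{M}_{par}$ (a twisted cotangent bundle).
\item Conclude that $\Br(U')\cong\Br(\mathcal{M}_{par})$.
\item Prove the codimension estimate for the complement of $U'$, so that $\Br(\mathcal{M}_{pc})\cong\Br(U')$.
\end{enumerate}
For the identification of $\Br$ along affine bundles, the route is to compare via the Leray spectral sequence, or to use that the pullback $\Br(B)\to\Br(A)$ is injective (there is a section étale-locally) and surjective by $\mathbb{A}^1$-homotopy invariance of $\Br$ in characteristic $0$ on Zariski-local trivializations, glued using purity.

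\textbf{Main obstacle.} The hard step is the codimension estimate for the locus where the underlying parabolic bundle is not stable. I will try to stratify this locus by the Harder--Narasimhan type of $E_*$ (or its parabolic HN filtration) and bound each stratum's dimension. A point $(E_*,D)$ in the stratum of a given type has dimension at most (dimension of such $E_*$) $+$ (dimension of the space of connections or Higgs fields preserving the data). A deformation-theoretic count, comparing $\dim H^1$ of endomorphisms with $\dim H^0$ of nilpotent parts, bounds the result by $\dim\mathcal{M}-(g-1)$ or similar. This is exactly where $g\geq3$ enters, giving codimension $\geq 2$. I would follow the estimate in \cite{BD} for logarithmic connections and adapt it to flags.

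For the Higgs case, an alternative is to use the $\mathbb{C}^*$-action: every point flows to a fixed point in the nilpotent cone. The unstable locus is then a union of upward flows of nonzero-$\Phi$ fixed components, whose dimensions can be computed, giving codimension at least $2$ in the same range of $g$.
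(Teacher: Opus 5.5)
Your proposal follows essentially the same route as the paper. Both arguments restrict to the open locus where the underlying parabolic bundle is stable, and show it is the cotangent bundle (Higgs case) or a $T^*\mathcal{N}^{\boldsymbol{m,\alpha}}(r,\xi)$-torsor (connection case) over the moduli of stable parabolic bundles. Both then identify the Brauer groups via the Leray spectral sequence, using $\pi_*\mathbb{G}_m=\mathbb{G}_m$ and $R^1\pi_*\mathbb{G}_m=R^2\pi_*\mathbb{G}_m=0$. The complement is shown to have codimension at least $(r-1)(g-2)+1\geq 2$ by a Harder--Narasimhan stratification, combined with the bound $-\chi(\mathrm{ParEnd}'(E_*))$ on the dimension of the space of connections on a fixed $E_*$ modulo $\mathrm{Aut}(E_*)$. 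Purity and the known value $\mathbb{Z}/\nu\mathbb{Z}$ for the parabolic moduli then finish the proof.

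One small correction: keep the Leray argument for the affine-bundle step. The justification in your alternative (``there is a section \'etale-locally'') does not by itself give injectivity on $\mathrm{Br}$, although injectivity does hold by passing to function fields.
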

	
	In \cite{BCD23}, it was shown that the Brauer group of the moduli stack of stable parabolic $\text{PGL}(r,\mathbb{C})$-bundles coincides with the Brauer group of the smooth locus of the associated coarse moduli space under the assumptions of full-flag. This result was further generalized in \cite{BC} for arbitrary flags. In this article, we establish an analogous result for the moduli stack of stable parabolic $\text{PGL}(r,\mathbb{C})$-connections.   
	\begin{theorem}[{Theorem \ref{thm:brauer-group-of-moduli-stack}}]
		Let $\boldsymbol{\alpha}$ be a generic system of weights for connections, as in Definition \ref{def:generic-weights-connections}. Let $\mf{N}^{\boldsymbol{m,\alpha}}_{pc}(r,i)$ denote the moduli stack of parabolic stable $\text{PGL}(r,\bb{C})$-connections on $X$ of topological type $i$ and parabolic data $(\boldsymbol{m,\,\alpha})$. Let $N_{pc}^{\boldsymbol{m,\alpha}}(r,i)$ denote the corresponding coarse moduli space. Let $N_{pc}^{\boldsymbol{m,\alpha}}(r,i)^{sm}$ denote its smooth locus. We have
		$$\textnormal{Br}\left(\mf{N}_{pc}^{\boldsymbol{m,\alpha}}(r,i)\right)\,\ \simeq\,\
		\textnormal{Br}\left(N_{pc}^{\boldsymbol{m,\alpha}}(r,i)^{sm}\right)\,.$$
	\end{theorem}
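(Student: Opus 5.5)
The approach follows the strategy of \cite{BCD23, BC} for parabolic $\text{PGL}(r,\bb{C})$-bundles, adapted to connections. Write $\mathfrak{N}=\mf{N}^{\boldsymbol{m,\alpha}}_{pc}(r,i)$, $N=N_{pc}^{\boldsymbol{m,\alpha}}(r,i)$ and $N^{sm}$ for its smooth locus. The first step is to describe $\mathfrak{N}$ as a quotient stack. Take the moduli space $\mathcal{M}=\mathcal{M}_{pc}^{\boldsymbol{m,\alpha}}(r,\xi)$ of stable parabolic $\text{SL}(r,\bb{C})$-connections (fixed determinant $\xi$ with $\deg\xi\equiv i \bmod r$). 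The group $\Gamma=\text{Pic}(X)[r]$ of $r$-torsion line bundles acts on it by tensor product; a line bundle $L$ with $L^{\otimes r}\simeq\mathcal{O}_X$ carries a unique holomorphic connection with trivial monodromy of its $r$-th power, so tensoring preserves fixed-determinant parabolic connections. Then $N=\mathcal{M}/\Gamma$. The stack $\mathfrak{N}$ is a $\mu_r$-gerbe-like object over the quotient stack $[\mathcal{M}/\Gamma]$, coming from the central extension by automorphisms. More precisely, $\mathfrak{N}\simeq[\mathcal{M}^{st}/\,\mathcal{G}]$, where stable objects have automorphism group (in $\text{PGL}$) equal to the stabilizer in $\Gamma$.

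The second step is to compare the Brauer group of the stack with that of $[\mathcal{M}/\Gamma]$, and then with $N^{sm}$. The plan is to let $\mathcal{M}^{\Gamma\text{-free}}\subset\mathcal{M}$ be the open subset where $\Gamma$ acts freely. Its image $N^{0}\subset N^{sm}$ is open, and over it $\mathfrak{N}$ restricts to an honest scheme, since stable objects with trivial stabilizer have trivial automorphisms. Hence $\mathfrak{N}|_{N^0}\simeq N^0$. The key claims are the following.
\begin{enumerate}[(a)]
\item The complement of $\mathcal{M}^{\Gamma\text{-free}}$ in $\mathcal{M}$ has codimension at least $2$. The fixed locus of $L\in\Gamma$ of order $e$ consists of connections pushed forward from the associated cyclic étale cover $Y\to X$. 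Using $g\geq 3$, a dimension count via the pushforward description shows that the codimension is large. Moreover, $N^{sm}\setminus N^0$ has codimension at least $2$ in $N^{sm}$.
\item The stack $\mathfrak{N}$ is smooth, being a quotient of the smooth variety $\mathcal{M}$, which is smooth by the deformation theory of stable parabolic connections.
\end{enumerate}
By purity for the Brauer group of regular (Deligne--Mumford) stacks, removing a closed substack of codimension at least $2$ does not change $\Br$. This gives $\Br(\mathfrak{N})\simeq\Br(\mathfrak{N}|_{N^0})=\Br(N^0)$. Applying purity again on the smooth variety $N^{sm}$ gives $\Br(N^0)\simeq\Br(N^{sm})$. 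Here one must check that the preimage in $\mathfrak{N}$ of $N\setminus N^0$ has codimension at least $2$; this follows from (a), since the map $\mathfrak{N}\to N$ is a quasi-finite coarse moduli map.

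The main obstacle is the codimension estimate (a) for the loci with nontrivial stabilizer, including the locus inside $N^{sm}$. For parabolic bundles this was done in \cite{BC}. I would first try to reduce to that case using the fibration $\mathcal{M}\to$ (stable parabolic bundles) over the open set where the underlying parabolic bundle is stable. This fibration is an affine bundle for $T^*$, and the fixed loci are compatible with it, so codimensions are preserved. The remaining issue is the complement, where the underlying bundle is not stable. Its codimension should be handled by the genericity of the weights $\boldsymbol\alpha$ for connections, Definition \ref{def:generic-weights-connections}, and $g\ge 3$. I would bound the dimension of the Hecke/pushforward-type families directly. Alternatively, the $\lambda$-connection deformation to strongly parabolic Higgs bundles can be used to transfer the estimate.
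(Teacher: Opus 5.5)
Your argument is correct, and its skeleton is the paper's. You discard the union $Z_{\boldsymbol{\alpha}}$ of fixed loci of the nontrivial $L\in\Gamma$. On the complement the action is free, so the open substack $[U/\Gamma]$ is the smooth scheme $U/\Gamma$. Purity is then applied twice: for the Deligne--Mumford stack, as in \cite[Proposition 4.2]{BCD23}, and for $N^{sm}$ via \cite{C}.

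The only real divergence is the codimension estimate. The paper gets codimension $\geq 3$ from the spectral-cover description (Proposition \ref{prop:parabolic-fixed-point}) and a doubled version of the count in \cite[Corollary 3.5]{BC}, omitting the details. Your preferred reduction also works and is more self-contained:
\begin{enumerate}
\item[(i)] On $\mc{U}$: since $\pi:\mc{U}\to\mathcal{N}^{\boldsymbol{m,\alpha}}(r,\xi)$ is a $\Gamma$-equivariant affine bundle, $\mc{U}^L\subset\pi^{-1}\bigl(\mathcal{N}^{\boldsymbol{m,\alpha}}(r,\xi)^L\bigr)$. So the codimension on $\mc{U}$ is at least the one from \cite{BC}. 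It actually doubles, because in each fibre the fixed points form an affine space modelled on the $L$-invariant part of the cotangent space; this is the rigorous form of the paper's ``twice'' heuristic.
\item[(ii)] Outside $\mc{U}$: you need neither Hecke families nor a degeneration to Higgs bundles. The whole locus $Z$ already has codimension at least $(r-1)(g-2)+1\geq 2$ by Lemma \ref{Codim Para}, so any piece of the fixed locus inside it is harmless.
\end{enumerate}
This route yields codimension $\geq 2$ instead of $\geq 3$, which is all purity needs. Like the paper's Lemma \ref{Codim Para} and Corollary \ref{cor:codimension}, it uses genericity of $\boldsymbol{\alpha}$ for parabolic bundles.

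One correction to your first step: $\mathfrak{N}$ is not a $\mu_r$-gerbe over $[\mc{M}/\Gamma]$. The scalar automorphisms of a stable parabolic connection become trivial in $\text{PGL}(r,\bb{C})$, so $\mathfrak{N}\simeq[\mc{M}/\Gamma]$. This is what the paper uses, and it is what your claim $\mathfrak{N}|_{N^0}\simeq N^0$ requires. A residual gerbe would not be a scheme over $N^0$ and could carry extra Brauer classes.
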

	
	\section{Preliminaries}\label{sec:preliminaries}
	
	Let $X$ be a compact Riemann surface of genus $g \geq 3$.  Let $\Omega^1_X$ denote the sheaf of holomorphic $1$-forms on $X$. Fix a finite subset of points $S$ in $X$, sometimes referred to as the \emph{set of parabolic points}.
	
	\subsection{Logarithmic connections and residues}\hfill\\ Let $E$ be a holomorphic vector bundle over $X$.
	A \textit{logarithmic connection on $E$ singular over $S$} is a $\mathbb{C}$-linear map
	$$\nabla:E\longrightarrow E\otimes \Omega_X^1(S)=E \otimes \Omega_X^1 \otimes \mathcal{O}_X(S)$$
	satisfying the Leibniz identity
	$\nabla(fs)=f\nabla(s)+df \otimes s$
	for locally defined holomorphic functions $f$ on $X$ and local sections $s$ of $E$. We shall sometimes denote a logarithmic connection as a pair $(E,\nabla)$.
	
	Two logarithmic connections $(E,\nabla)$ and $(E',\nabla')$ having singularities along $S$ are said to be \textit{isomorphic}, if there exists an $\mc{O}_X$-linear isomorphism $\phi: E\xrightarrow{\,\,\simeq\,\,} E'$ making the following diagram commute:
	\begin{align}
		\xymatrix{E \ar[rr]^(.4){\nabla} \ar[d]_{\phi} && E\otimes \Omega^1_X(S) \ar[d]^{\phi\otimes\Id} \\
			E'\ar[rr]^(.4){\nabla'} && E'\otimes\Omega^1_X(S)
		}
	\end{align} 
	
	We now come to the notion of the residue of a logarithmic connection. For each point $x\in S$, the fiber $\Omega_X^1(S)_x$ can be canonically identified with $\bb{C}$ using Poincar\'e adjunction formula. To be more explicit, if $z$ is a local coordinate around $x$ with $z(x)=0$, any element
	$\omega \in \Omega_X^1(S)_x$ can be written uniquely as
	\[
	\omega = a\,\frac{dz}{z} + \eta
	\]
	where $a \in \mathbb{C}$ and $\eta$ is a holomorphic 1-form at $x$.
	The adjunction map is given by sending $\omega$ to $a$.
	This description is independent of the choice of the local coordinate $z$. For a logarithmic connection $(E,\nabla)$,  the composition
	\[E\xrightarrow{\,\,\,\nabla\,\,\,}E\otimes\Omega_X^1(S)\longrightarrow \left(E\otimes\Omega^1_X(S)\right)_x \simeq E_x\]
	is $\mc{O}_X$-linear. This gives rise to a $\bb{C}$-linear endomorphism of $E_x$, known as the \textit{residue} of $\nabla$ at $x$. It is denoted by $\text{Res}_x(\nabla)$.
	
	
	\subsection{parabolic vector bundles and parabolic connections}
	\begin{definition}\label{def:parabolic-bundles}
		A \textit{parabolic vector bundle} of rank $r$ on $X$ is a holomorphic vector bundle $E$ of
		rank $r$ together with the data of a weighted flag on the fiber of $E$ over each $x\,\in\, S$:
		\begin{align}\label{eqn:parabolic-data}
			E_x =: E^x_1&\, \supsetneq\, E^x_2\, \supsetneq\, \cdots\,\supsetneq\, E^x_{\ell(x)}\, \supsetneq\,  E^x_{\ell(x)+1}\,=\,0\\ 
			0\,\leq\,& \alpha^x_{1}\,<\,\alpha^x_{2}
			\,<\,\cdots\,<\,\alpha^{x}_{\ell(x)}\,<\,1.\nonumber
		\end{align}
		\begin{enumerate}[$\bullet$]
			\item Such a flag is said to be of length $\ell(x)$, and the numbers $m^x_{j}
			\,:=\, \dim E^x_{j} -
			\dim E^{x}_{j+1}$ is said to be the \textit{multiplicity} of the weight $\alpha^x_{j}$.  The collection $\boldsymbol{m}:=\{(m^x_1,m^x_2,\cdots,m^x_{\ell(x)})\}_{x\in S}$ is known as a  \textit{system of multiplicities}. Clearly, we have $\sum_{j=1}^{\ell(x)}m^x_j =r$\,.
			
			\item The collection of real numbers $\boldsymbol{\alpha}\,:=\,\{(\alpha^x_{1}
			\,<\,\alpha^x_{2}\,<\,\cdots\,<\,\alpha^x_{\ell(x)})\}_{x\in S}$ is called a \textit{system
				of weights}.  The tuple $(\boldsymbol{m,\alpha})$ is known as a \textit{parabolic data along $S$}.
		\end{enumerate}
		We shall often denote a parabolic vector bundle simply by $E_*$ and suppress the parabolic data, when there is no scope of confusion. When $E$ is of rank $r$ and degree $d$, The \emph{parabolic degree} of $E_{*}$ is defined as the real number
		$$p\mathrm{deg}(E_{*})=d + \sum_{x\in S}\sum_{j=1}^{\ell(x)} m_j^x\alpha_j^x\,.$$
		The \emph{parabolic slope} of $E_{*}$ is defined as the real number
		$p\mu(E_{*}):=\frac{p\mathrm{deg}(E_{*})}{r}\,.$
	\end{definition}
	
	Let $E_{*}$ be a parabolic bundle and $F$ be a vector subbundle of $E$. Then the parabolic structure on $E$ induces a parabolic structure on $F$ in a natural way \cite{BY,MS}. 
	The vector bundle $F$ together with this induced parabolic structure is denoted by $F_{*}$ and is called a parabolic subbundle of $E$.
	
	\begin{definition}
		A parabolic vector bundle $E_{*}$ is called \emph{parabolic semistable} (respectively \emph{parabolic stable}) if for every non-zero proper parabolic subbundle $F_{*}$, we have 
		$$p\mu(F_{*}) \leq p\mu(E_*) \quad (\text{respectively,}\,\,\, p\mu(F_{*}) < p\mu(E_*))\,.$$
	\end{definition}
	\begin{definition}\label{def:parabolic-connections}
		Let $E_*$ be a parabolic vector bundle on $X$ with parabolic data $(\boldsymbol{m,\alpha})$ along $S$. A \textit{parabolic connection on} $E_*$ is a logarithmic connection $\nabla$ on $E$ singular over $S$, such that for all $x\in S$,
		\begin{enumerate}[(i)]
			\item The residue $\text{Res}_x(\nabla)$ is semisimple and preserves the flag at $E_x$ coming from the parabolic structure, i.e. $\text{Res}_x(\nabla)\left(E^x_j\right)\subset E^x_{j+1}$ for all $j$ (Definition \ref{def:parabolic-bundles}), and
			\item $\text{Res}_x(\nabla)\left(E^x_j/E^x_{j+1}\right) = \alpha^x_j\cdot\text{Id}_{_{E^x_j/E^x_{j+1}}}$ for all $j$.
		\end{enumerate}
	\end{definition}
	\begin{definition}
		A parabolic connection $(E_*,\nabla)$ is said to be \textit{parabolic semistable} (respectively \textit{parabolic stable}), if for any proper sub-bundle $F$ preserved under $\nabla$, we have 
		$$p\mu(F_{*}) \leq p\mu(E_*) \quad (\text{respectively,}\,\,\, p\mu(F_{*}) < p\mu(E_*))\,.$$
	\end{definition}
	\subsection{Generic systems of weights}\hfill\\
	Fix a system of multiplicities $\boldsymbol{m}$ of rank $r$ along $S$. In other words, for any vector bundle $E$ of rank $r$ over $X$, the data $\boldsymbol{m}$ specifies the flag-type at the fibers $E_x$ over for each point in $x\in S$. There is an obvious notion of when a system of weights $\boldsymbol{\alpha}$ is compatible with this given system of multiplicities; namely, at each point in $S$, the number of weights should match with the number of multiplicities at that point. 
	
	\begin{definition}[\text{\cite[\S~2]{BY}}]\label{def:generic-weights}
		Fix a system of multiplicities $\boldsymbol{m}$ of rank $r$ along $S$. A system of weights $\boldsymbol{\alpha}$ is known as \textit{generic}, if parabolic semistability and parabolic stability coincide for any parabolic vector bundle of rank $r$ and parabolic data $(\boldsymbol{m,\alpha})$.
	\end{definition} 
	One can similarly talk about a generic system of weights for parabolic connections.
	\begin{definition}[\text{\cite[p. 575]{BY96}}]\label{def:generic-weights-connections}
		Fix a system of multiplicities $\boldsymbol{m}$ of rank $r$ along $S$. A system of weights $\boldsymbol{\alpha}$ compatible with $\boldsymbol{m}$ is said to be \textit{generic for parabolic connections}, if if parabolic semistability and parabolic stability coincide for any parabolic connection $(E_*,\nabla)$ singular over $S$ of rank $r$ and having parabolic data $(\boldsymbol{m,\alpha})$.
	\end{definition}
	\section{Moduli space of (twisted) stable parabolic $\text{SL}(r,\bb{C})-$connections}\label{section:moduli-of-connections}
	Fix an integer $d$, and consider a parabolic data $(\boldsymbol{m,\alpha})$ along $S=\{x_1,x_2,\cdots,x_n\}$ satisfying
	\begin{align}\label{eqn:degree-condition}
		d+\sum_{i=1}^{n}\sum_{j=1}^{\ell_i}m^i_j\alpha^i_j =0\,.
	\end{align}
	Under the additional assumption $\alpha_i^1>0$ for all $1\leq i\leq n$, Inaba and Saito in \cite{IS} constructed the \textit{moduli space of stable parabolic connections }$(E_*,\nabla)$ of rank $r$ and degree $d$ on $X$ along points in $S$, denoted by
	\[\mc{M}^{\boldsymbol{m},\boldsymbol{\alpha}}_{pc}(r,d)\,.\]
	It is a smooth quasi-projective variety of dimension (see \cite[Theorem 1.3]{IS})
	\begin{align}
		\dim \mc{M}^{\boldsymbol{m},\boldsymbol{\alpha}}_{pc}(r,d) = 2r^2(g-1)+2+2\sum_{i=1}^{n}\sum_{j=1}^{\ell_i}\sum_{j'>j} m^i_jm^i_{j'}
	\end{align}
	where $\ell_i := \ell(x_i)$ (see Definition \ref{def:parabolic-bundles}). A parabolic connection $\nabla$ on a parabolic vector bundle $E_*$ with parabolic data $(\boldsymbol{m,\alpha})$ gives rise to a logarithmic connection $\det(\nabla)$ on $\det(E)$ with residues
	$$\text{Res}_{x_i}(\det(\nabla)) = \text{tr}(\text{Res}_{x_i}\nabla)=\sum_{j=1}^{\ell_i}m^i_j\alpha^i_j$$ for each $x_i\in S.$  Denote $\lambda_i := \sum_{j=1}^{\ell_i}m^i_j\alpha^i_j$ and $\boldsymbol{\lambda}:=(\lambda_1,\cdots,\lambda_n)$, and let $\mc{M}_{lc}(1,d,\boldsymbol{\lambda})$ denote the moduli space of line bundles on $X$ of degree $d$ with logarithmic connections along $S$ with residues $\lambda_i$ at $x_i\in S$. This moduli space is nonempty due to the condition \eqref{eqn:degree-condition} \cite[Proposition 4.1]{BH}. Consider the morphism
	\begin{align}\label{eqn:map}
		f: \mc{M}^{\boldsymbol{m},\boldsymbol{\alpha}}_{pc}(r,d) &\xrightarrow{\,\,\,\,\,\,\,\,\,\,\qquad} \mc{M}_{lc}(1,d,\boldsymbol{\lambda})\\
		(E_*,\nabla) &\mapsto (\det(E),\det(\nabla))\nonumber
	\end{align}
	It can be shown that $f$ is a smooth morphism (see \cite[\S~5]{I} for the full-flag case; see also \cite[Theorem 1.2]{IS}). 
	\begin{definition}[\text{\cite{IS}}]\label{def:parabolic-connections-moduli}
		Assume that the condition \eqref{eqn:degree-condition} is satisfied. Fix a line bundle $\xi$ on $X$ of degree $d$, and a logarithmic connection $\nabla_{\xi}$ on $\xi$ with residues $\lambda_i := \sum_{j=1}^{\ell_i}m^i_j\alpha^i_j$ along the points $x_i\in S$ $(1\leq i\leq n)$. We shall denote by 
		$$\mc{M}^{\boldsymbol{m},\boldsymbol{\alpha}}_{pc}(r,\xi)$$
		the \textit{moduli space of stable parabolic connections} $(E_*,\nabla)$ of rank $r$, parabolic data $(\boldsymbol{m,\alpha})$ and determinant $\xi$, such that $(\det(E),\det(\nabla))\simeq (\xi,\nabla_{\xi})$ as logarithmic connections. In other words, $\mc{M}^{\boldsymbol{m},\boldsymbol{\alpha}}_{pc}(r,\xi)$ is the the fiber of the smooth map $f$ \eqref{eqn:map} over a point $(\xi,\nabla_{\xi})\in\mc{M}_{lc}(1,d,\boldsymbol{\lambda})$.
	\end{definition}
	To compute the dimension of $\mc{M}^{\boldsymbol{m},\boldsymbol{\alpha}}_{pc}(r,\xi)$, take any holomorphic line bundle $\xi'$ on $X$ of degree $d$. The condition \eqref{eqn:degree-condition} implies that $\xi'$ admits a logarithmic connection with residues $\lambda_i$ at each $x_i\in S$ \cite[Theorem 1.1]{BL}. Consequently, $\mc{M}_{lc}(1,d,\boldsymbol{\lambda})$ is an affine bundle on $\Pic^d(X)$ whose fiber over $\xi'$ is of dimension
	\[\dim H^0\left(X,\,End(\xi')\otimes\Omega_X^1\right) = \dim H^0(X,\Omega_X^1) = g.\]
	It follows that $\dim \mc{M}_{lc}(1,d,\boldsymbol{\lambda}) = 2g$. Consequently, $\mathcal{M}_{pc}^{\boldsymbol{m,\alpha}}\left(r,\xi\right)$ is a smooth irreducible quasi-projective variety of dimension
	\begin{align}\label{eqn:dimension-parabolic-moduli}
		\dim \mathcal{M}_{pc}^{\boldsymbol{m,\alpha}}\left(r,\xi\right) = \dim \mathcal{M}_{pc}^{\boldsymbol{m,\alpha}}\left(r,d\right) - 2g = 2(r^2-1)(g-1)+2\sum_{i=1}^{n}\sum_{j=1}^{\ell_i}\sum_{j'>j} m^i_jm^i_{j'}
	\end{align} 
	
	\section{Brauer group of $\mc{M}^{\boldsymbol{m},\boldsymbol{\alpha}}_{pc}(r,\xi)$}\label{sec:brauer-group-parabolic-connections}
	\begin{lemma}\label{L:dim-conn}
		Consider a parabolic data $(\boldsymbol{m,\alpha})$ along $S=\{x_1,\cdots,x_n\}$, where $\boldsymbol{\alpha}$ is a generic system of weights in the sense of Definition \ref{def:generic-weights}. Suppose $E$ is a vector bundle of rank $r$ and determinant $\xi$, and assume $E_*$ admits a connection $\nabla$ such that $(E_*,\nabla)$ is stable; in other words, $(E_*,\nabla)\in \mathcal{M}^{\boldsymbol{m,\alpha}}_{pc}(r,\xi)$. Then, the space isomorphism classes of parabolic connections $D$ on $E_*$ such that $(E_*, D)\in \mathcal{M}^{\boldsymbol{m,\alpha}}_{pc}(r,\xi)$ is of  dimension at most $$(r^2-1)(g-1)+\sum_{i=1}^{n}\sum_{j=1}^{\ell_i}\sum_{j'>j} m^i_jm^i_{j'}$$ 
	\end{lemma}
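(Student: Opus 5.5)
The plan is to identify the set of parabolic connections on $E_*$ with fixed determinant connection as an affine space, and to bound its dimension by a cohomological computation. Fix the stable parabolic connection $\nabla$ on $E_*$. Any other parabolic connection $D$ on $E_*$ with $(\det E,\det D)\simeq(\xi,\nabla_\xi)$ differs from $\nabla$ by an $\mathcal{O}_X$-linear map
\[D-\nabla\in H^0\bigl(X,\,SParEnd_0(E_*)\otimes\Omega^1_X(S)\bigr),\]
where $SParEnd_0(E_*)$ is the sheaf of trace-free endomorphisms $\phi$ of $E$ with $\phi_x(E^x_j)\subset E^x_{j+1}$ for all $x\in S$ and all $j$. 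Indeed, conditions (i) and (ii) of Definition \ref{def:parabolic-connections} force the difference of the two residues to be strictly nilpotent with respect to the flag. The trace-free condition holds because the determinant connection is fixed. A priori we only know $\det D\simeq\nabla_\xi$ up to an automorphism of $\xi$, but automorphisms of a line bundle are scalars and act trivially on connections. Conversely, $\nabla+\theta$ is a parabolic connection for every such $\theta$. One small point to check is semisimplicity of the residue: with generic weights the residue has distinct eigenvalues on the graded pieces, and since the $\alpha^x_j$ are distinct and lie in $[0,1)$, the residue remains semisimple. So the parabolic connections on $E_*$ form an affine space modelled on $V:=H^0(X,SParEnd_0(E_*)\otimes\Omega^1_X(S))$. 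Stability is an open condition, and isomorphism classes are a quotient by $\mathrm{Aut}(E_*)$, so the dimension of the space of isomorphism classes is at most $\dim V$.

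To compute $\dim V$, use the standard parabolic duality $SParEnd(E_*)\otimes\Omega^1_X(S)\simeq ParEnd(E_*)^\vee\otimes\Omega^1_X$, together with its trace-free version. Serre duality then gives
\[\dim V=h^1\bigl(ParEnd_0(E_*)\bigr).\]
By Riemann--Roch, $\chi(ParEnd_0(E_*))=-(r^2-1)(g-1)-\sum_i\sum_{j<j'}m^i_jm^i_{j'}$. This uses $\deg ParEnd(E_*)=-\sum_i\sum_{j<j'}m^i_jm^i_{j'}$, the codimension of the parabolic subalgebra in $\mathfrak{gl}_r$ at each $x_i$. Hence
\[\dim V=(r^2-1)(g-1)+\sum_{i}\sum_{j}\sum_{j'>j}m^i_jm^i_{j'}+h^0\bigl(ParEnd_0(E_*)\bigr).\]

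The main obstacle is the term $h^0(ParEnd_0(E_*))$. This is exactly where the genericity of $\boldsymbol{\alpha}$ from Definition \ref{def:generic-weights} and the stability of $(E_*,\nabla)$ must enter. If $E_*$ is parabolic stable, then it is simple and $h^0=0$, which gives the claimed bound. In general, however, $E_*$ need not be parabolic semistable, so $h^0$ may be positive. The route I would try first is to exploit the group $\mathrm{Aut}(E_*)/\mathbb{C}^*$, whose Lie algebra is $H^0(ParEnd_0(E_*))$, acting on the affine space $\nabla+V$. Stability of $(E_*,D)$ implies that $D$ has only scalar automorphisms, so the stabilizers in $\mathrm{Aut}(E_*)/\mathbb{C}^*$ are finite on the stable locus. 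The orbits therefore have dimension exactly $h^0(ParEnd_0(E_*))$, and the quotient of the stable open subset by this action has dimension at most
\[\dim V-h^0=(r^2-1)(g-1)+\sum_{i}\sum_{j}\sum_{j'>j}m^i_jm^i_{j'}.\]
If making this rigorous is troublesome, a fallback is to argue through the fiber dimension of the forgetful map to the moduli stack of parabolic bundles, where the same cancellation of $h^0$ against the automorphism group appears.
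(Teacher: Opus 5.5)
Your proposal is correct and is essentially the paper's argument: the parabolic connections with fixed determinant connection form an affine space over $H^0(X,\mathrm{SParEnd}'(E_*)\otimes\Omega^1_X(S))\simeq H^1(X,\mathrm{ParEnd}'(E_*))^{\vee}$. You subtract the orbit dimension $\dim\mathrm{Aut}(E_*)-1=h^0(X,\mathrm{ParEnd}'(E_*))$, using that stable connections have only scalar automorphisms, which leaves $-\chi(\mathrm{ParEnd}'(E_*))$; this is evaluated through the length $\sum_{j<j'}m^i_jm^i_{j'}$ of the skyscraper quotient $\mathrm{End}'(E)/\mathrm{ParEnd}'(E_*)$. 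One small correction: genericity of the weights is neither needed for semisimplicity of $\mathrm{Res}_x(\nabla+\theta)$ (distinctness of the $\alpha^x_j$ already gives $\prod_j(\mathrm{Res}_x(\nabla+\theta)-\alpha^x_j)=0$), nor used anywhere else in this lemma, in your argument or the paper's.
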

	\begin{proof}
		Let $\text{PC}_{\xi}(E_*)$ denote the space of all parabolic connections on $E_*$ which also satisfy $(\det(E),\det(D))\simeq(\xi,\nabla_{\xi})$ as logarithmic connections. It contains an open subspace $\text{PC}^s_{\xi}(E_*)$ consisting of all parabolic connections $D$ on $E_*$ such that $(E_*,D)$ is \textit{stable}, as well as  $(\det(E),\det(D))\simeq(\xi,D_{\xi})$. Note that $\text{PC}_{\xi}(E_*)$ is a subspace of an affine space modelled over the vector space $\mathrm{H}^0(X, \Omega_X^1(S) \otimes \mathrm{SParEnd}'(E_*))$, where $\mathrm{SParEnd}'(E_*)$ denotes the strongly parabolic morphisms whose trace is zero.
		
		Given $\Phi \in \mathrm{Aut}(E_*)$  and a parabolic connection $D\in \text{PC}^s_{\xi}(E_*)$ on $E_*$, the $\mathbb{C}$-linear morphism $(\Phi \otimes \mathrm{id}_{\Omega^1_X(S)}) \circ D \circ \Phi^{-1}$ again defines a parabolic connection on $E_*$. In fact,
		$$(D,\Phi) \mapsto (\Phi \otimes \mathrm{id}_{\Omega^1_X(S)}) \circ D \circ \Phi^{-1}$$
		defines an action of $\mathrm{Aut}(E_*)$ on $\text{PC}^s_{\xi}(E_*)$. The quotient  $\text{PC}^s_{\xi}(E_*)/\mathrm{Aut}(E_*)$ parametrizes all isomorphic parabolic connections on $E_*$, which is our required space. In a similar manner, we have an action of $\mathrm{Aut}(E_*)$ on $\text{PC}_{\xi}(E_*)$. Clearly, the space $\text{PC}^s_{\xi}(E_*)/\mathrm{Aut}(E_*)$ is an open subset of $\text{PC}_{\xi}(E_*)/\mathrm{Aut}(E_*)$, and hence 
		$$\mathrm{dim}\left(\text{PC}^s_{\xi}(E_*)/\mathrm{Aut}(E_*)\right) = \mathrm{dim}\left(\text{PC}_{\xi}(E_*)/\mathrm{Aut}(E_*)\right) .$$ 
		
		Choose any $D\in \text{PC}^s_{\xi}(E_*)$, and let 
		$$\mathrm{Aut}(E_*)_D=\{ \Phi \in \mathrm{Aut}(E_*)\; |\; (\Phi \otimes \mathrm{id}_{\Omega^1_X(S)}) \circ D \circ \Phi^{-1} = D \}$$
		denote the isotropy subgroup. Since the pair $(E_*,D)$ is stable, 
		it is a standard fact that $\mathrm{Aut}(E_*)_D$ only consists of the scalar automorphism of $E_*$. Hence, 
		$\mathrm{dim}\left(\mathrm{Aut}(E_*)_D\right)=1.$
		
		\noindent
		Since the Lie algebra of $\mathrm{Aut}(E_*)$ is $\mathrm{H}^0(X, \mathrm{ParEnd}(E_*))$, we have
		$$\mathrm{dim}\; \mathrm{Aut}(E_*)=\mathrm{dim}\; \mathrm{H}^0(X, \mathrm{ParEnd}(E_*))=\mathrm{dim}\; \mathrm{H}^0(X, \mathrm{ParEnd}'(E_*))+1,$$
		where $\mathrm{ParEnd}'(E_*)$ denote the space of parabolic endomorphisms of trace zero. Using Parabolic Serre duality $$\mathrm{H}^1(X, \mathrm{ParEnd}'(E_*))\simeq \mathrm{H}^0(X, \Omega_X^1(S) \otimes \mathrm{SParEnd}'(E_*))^{\vee}$$ it follows that the dimension of $\text{PC}^s_{\xi}(E_*)/\mathrm{Aut}(E_*)$ is at most
		\begin{align}
			&\mathrm{dim}\; \mathrm{H}^0(X, \Omega_X^1(S) \otimes \mathrm{SParEnd}'(E_*)) - \mathrm{dim}\; \mathrm{Aut}(E_*) + \mathrm{dim}\; \mathrm{Aut}(E_*)_D\nonumber\\
			=&\,\, \mathrm{dim}\; \mathrm{H}^1(X, \mathrm{ParEnd}'(E_*)) - \mathrm{H}^0(X, \mathrm{ParEnd}'(E_*))- 1 + 1 \nonumber\\
			=& \,\, - \chi (\mathrm{ParEnd}'(E_*)), \label{dim Conn/Aut}
		\end{align}
		where $\chi (\mathrm{ParEnd}'(E_*))$ denotes the Euler-Poincar\'e characteristic of $\mathrm{ParEnd}'(E_*)$ over $X$. 
		Next, consider the following short exact sequence of sheaves on $X$ \cite[Lemma 2.4]{BH}:
		$$0 \longrightarrow \mathrm{ParEnd}'(E_*) \longrightarrow \mathrm{End}'(E) \longrightarrow \mathcal{K}_S \longrightarrow 0$$
		where $\mathrm{End}'(E)$ denotes the space of trace zero endomorphisms of the underlying bundle $E$, and $\mathcal{K}_S$ denotes the natural skyscraper sheaf supported on the parabolic points $S$. From \cite[Lemma 2.4]{BH}, we also get 
		\begin{equation}\label{EulerChar-skyscrapper}
			\chi(\mathcal{K}_S)=\sum_{i=1}^{n}\sum_{j=1}^{\ell_i}\sum_{j'>j} m^i_jm^i_{j'}.  
		\end{equation}
		Then, we have 
		\begin{equation}\label{EulerChar-seq}
			\chi(\mathrm{End}'(E))=\chi(\mathrm{ParEnd}'(E_*))+\chi(\mathcal{K}_S).  
		\end{equation}
		
		Consider also the exact sequence of sheaves over $X$ given by the trace map
		$$0 \longrightarrow \mathrm{End}'(E) \longrightarrow \mathrm{End}(E) \xrightarrow{\,\,\,tr\,\,\,} \mathcal{O}_X \longrightarrow 0\,. $$
		Using Riemann-Roch theorem, we have $\chi(\mathrm{End}(E))=r^2(1-g).$ It follows that
		\begin{equation}\label{EulerChar-End'}
			\chi(\mathrm{End}'(E))= \chi(\text{End}(E))-\chi(\mathcal{O}_X) = (r^2-1)(1-g).  
		\end{equation}
		Therefore, from equations (\ref{dim Conn/Aut}), (\ref{EulerChar-seq}), (\ref{EulerChar-skyscrapper}) and (\ref{EulerChar-End'}), it follows that the dimension of  $\text{PC}^s_{\xi}(E_*)/\mathrm{Aut}(E_*)$ is at most
		$$(r^2-1)(g-1)+\sum_{i=1}^{n}\sum_{j=1}^{\ell_i}\sum_{j'>j} m^i_jm^i_{j'}.$$ 
		This proves our claim.
	\end{proof}
	Let 
	$\mc{U}$
	denote the open subset of $\mathcal{M}^{\boldsymbol{m,\alpha}}_{pc}(r,\xi)$ consisting of the those parabolic connections $(E_*,\nabla)$ whose underlying parabolic vector bundle $E_*$ is stable (see \cite[Theorem 2.8(A)]{M}). Consider its complement
	\begin{equation}\label{Z}
		Z:= \mathcal{M}^{\boldsymbol{m,\alpha}}_{pc}(r,\xi) \setminus \mc{U}. 
	\end{equation}
	We want to compute the codimension of the closed subset $Z$ inside $\mathcal{M}_{pc}^{\boldsymbol{m,\alpha}}(r,\xi)$.
	\begin{lemma}\label{Codim Para}
		Let $Z$ be as mentioned in \eqref{Z}. Then, for $r\geq 2$, and $g\geq 3$, we have
		$$\mathrm{codim}\left(Z, \mathcal{M}^{\boldsymbol{m,\alpha}}_{pc}(r,\xi)\right)\geq 2.$$
	\end{lemma}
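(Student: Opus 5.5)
We bound the dimension of $Z$ by stratifying according to the underlying parabolic bundle, and then compare with \eqref{eqn:dimension-parabolic-moduli}. Every $(E_*,\nabla)\in Z$ has $E_*$ not parabolic stable. Since $\boldsymbol{\alpha}$ is generic, $E_*$ is then not parabolic semistable either. Such bundles admit a unique parabolic Harder--Narasimhan filtration, so they lie in a family of parabolic bundles stratified by Harder--Narasimhan type. We consider the forgetful map
$$\pi: Z\longrightarrow \{\text{unstable parabolic bundles } E_* \text{ of rank } r,\ \det E\simeq\xi\}/\simeq .$$
By Lemma \ref{L:dim-conn}, each fiber of $\pi$ has dimension at most
$$N:=(r^2-1)(g-1)+\sum_{i=1}^{n}\sum_{j=1}^{\ell_i}\sum_{j'>j} m^i_jm^i_{j'}\,,$$
which is exactly half of $\dim\mathcal{M}^{\boldsymbol{m,\alpha}}_{pc}(r,\xi)$. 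It therefore suffices to show that the image of $\pi$, which consists of unstable parabolic bundles admitting a parabolic connection and considered modulo isomorphism, has dimension at most $N-2$. Here the dimension is measured as the dimension of the stack of such bundles plus the automorphism contribution, which is exactly how Lemma \ref{L:dim-conn} accounts for $\mathrm{Aut}(E_*)$.

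To estimate this, we fix a Harder--Narasimhan type $0\subset F_1\subset\cdots\subset F_k=E$ with parabolic semistable graded pieces $Q_t$ of ranks $r_t$ and strictly decreasing parabolic slopes. The usual count bounds the stack dimension of bundles of this type by
$$\sum_t \dim(\text{stack of } Q_{t,*})+\sum_{s<t}\dim \mathrm{Ext}^1(Q_{t,*},Q_{s,*})\,,$$
with the determinant condition imposed. The stratum of parabolic stable bundles has stack dimension $-\chi(\mathrm{ParEnd}'(E_*))=N$. For the unstable stratum, the standard computation gives a codimension in the stack equal to $-\sum_{s<t}\chi(\mathrm{ParHom}(Q_{t,*},Q_{s,*}))$. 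Since parabolic slopes decrease, $\mathrm{ParHom}(Q_{s,*},Q_{t,*})=0$ for $s<t$. By Riemann--Roch for parabolic Hom sheaves, this codimension is at least
$$\sum_{s<t} r_sr_t(g-1)+(\text{positive slope-difference terms})\geq (r-1)(g-1)\geq 2$$
for $g\geq 3$ and $r\geq 2$. The Serre-dual term $\mathrm{ParHom}(Q_{s,*},Q_{t,*}\otimes\Omega^1_X(S))$ could contribute with the wrong sign. We handle this by noting that the parabolic degree contributions from $S$ are bounded by the weights lying in $[0,1)$; this is the computation carried out in \cite{BBS,BD} for the bundle case, and we follow it.

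Combining the two estimates gives $\dim Z\leq (N-2)+N=\dim\mathcal{M}^{\boldsymbol{m,\alpha}}_{pc}(r,\xi)-2$. The main obstacle is the middle step: making the codimension estimate for the unstable locus precise, uniformly over all Harder--Narasimhan types and in the presence of parabolic structure. The strongly parabolic versus parabolic Hom terms at $S$ can shift the Euler characteristics by the flag terms $m^i_jm^i_{j'}$, and these must be controlled. I would first try the two-step case $k=2$, then induct on the length of the filtration, as in the analogous codimension estimate for logarithmic connections in \cite{BD}. The hypothesis $g\geq 3$ enters precisely to absorb these boundary terms.
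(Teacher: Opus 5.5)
Your overall architecture is the paper's: bound the fibres of $(E_*,\nabla)\mapsto E_*$ by $N$ via Lemma~\ref{L:dim-conn}, then show that isomorphism classes of non-semistable $E_*$ of a given Harder--Narasimhan type have at most $N-2$ moduli. You also correctly say that the base must be measured as stack dimension plus the automorphism contribution.

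\textbf{The gap.} What you actually estimate is only the codimension of the Harder--Narasimhan stratum in the stack, namely $-\sum_{s<t}\chi(\mathrm{ParHom}(Q_{s,*},Q_{t,*}))=\sum_{s<t}h^1(\mathrm{ParHom}(Q_{s,*},Q_{t,*}))$. Your formula has the indices reversed: $-\chi(\mathrm{ParHom}(Q_{t,*},Q_{s,*}))$ is the dimension of the extension stack, not the codimension. The automorphism contribution is never bounded. You can see it in the gap between your two counts. Your first display uses $\dim\mathrm{Ext}^1(Q_{t,*},Q_{s,*})$, while your codimension formula uses $-\chi$. These differ by $\sum_{s<t}h^0(\mathrm{ParHom}(Q_{t,*},Q_{s,*}))$, i.e.\ by homomorphisms from lower-slope to higher-slope pieces. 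These give non-scalar endomorphisms $E\to Q_{t}\to Q_{s}\to E$ and need not vanish.

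For a two-step filtration, count $Q_{1,*}$, $Q_{2,*}$ and a nonzero extension class up to scaling. The base then has at most $N-h^1(\mathrm{ParHom}(Q_{1,*},Q_{2,*}))+h^0(\mathrm{ParHom}(Q_{2,*},Q_{1,*}))$ moduli. So $\dim Z\le (N-2)+N$ needs an inequality controlling the last term. The standard one goes as follows. If $V=\mathrm{ParHom}(Q_{2,*},Q_{1,*})$ has a nonzero section, Hopf's lemma applied to $H^0(V)\times H^0(\Omega^1_X)\to H^0(V\otimes\Omega^1_X)$ gives $h^0(V\otimes\Omega^1_X)\ge h^0(V)+g-1$. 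Also $V\otimes\Omega^1_X\subset \mathrm{SParHom}(Q_{2,*},Q_{1,*})\otimes\Omega^1_X(S)$, and by parabolic Serre duality the $h^0$ of the latter is $h^1(\mathrm{ParHom}(Q_{1,*},Q_{2,*}))$. Hence the codimension is at least $g-1$. This is exactly the content the paper imports from \cite[Lemma 3.1]{BM}: it is the bound $r^2(g-1)-(r-1)(g-2)-g+\sum_i\sum_{j<j'}m^i_jm^i_{j'}$ on the base, i.e.\ codimension at least $1+(r-1)(g-2)$.

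\textbf{Where the difficulty actually lies.} The ``wrong-sign'' boundary terms at $S$ cause no trouble in the stack count. By parabolic Riemann--Roch,
$-\chi(\mathrm{ParHom}(Q_{s,*},Q_{t,*}))=r_sr_t(g-1)+(r_td_s-r_sd_t)+\sum_{x\in S}c_x$,
where $c_x$ counts pairs (weight of $Q_{s,*}$, weight of $Q_{t,*}$) at $x$ with the first strictly larger. The inequality $p\mu(Q_{s,*})>p\mu(Q_{t,*})$, together with weights in $[0,1)$, forces $(r_td_s-r_sd_t)+\sum_x c_x\ge 1$. So the stack codimension is at least $\sum_{s<t}(r_sr_t(g-1)+1)$, which is already $\ge 2$ for $g\ge 2$. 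The hypothesis $g\ge 3$ is needed to absorb the automorphism term (the loss of $g-1$ above), not to handle boundary terms at $S$.
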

	\begin{proof}
		Let $(E_{*}, D)\in Z$. Then, $E_{*}$ is not parabolic stable. Since $\boldsymbol{\alpha}$ is a generic system of weights, $E_{*}$ is not parabolic semi-stable. Let
		$$0=E_{*}^{1} \subset E_{*}^{2} \subset \cdots \subset E_{*}^{l-1} \subset E_{*}^{l}= E_{*} $$
		be the Harder-Narasimhan filtration of the parabolic vector bundle $E_{*}$. The collection of pairs of integers $\{ (\mathrm{rank}(E_{*}^{i}), p\mathrm{deg}(E_{*}^{i}))\}_{i=1}^{l}$ is called the Harder-Narasimhan polygon of $E_{*}$.
		
		The space of all isomorphism classes of parabolic vector bundles over $X$ whose Harder-Narasimhan polygon coincides with the given parabolic vector bundle $E_{*}$ is of dimension at most
		\begin{align}\label{eqn:1}
			r^2(g-1)-(r-1)(g-2)-g+\sum_{i=1}^{n}\sum_{j=1}^{\ell_i}\sum_{j'>j} m^i_jm^i_{j'}.
		\end{align}
		This follows from the analogous techniques of \cite[Lemma 3.1, page no. 303]{BM}.
		
		By Lemma \ref{L:dim-conn}, the dimension of the space of all isomorphism classes of parabolic connections on any given parabolic vector bundle $E_{*}$ is at most
		\begin{align}\label{eqn:2}
			(r^2-1)(g-1)+\sum_{i=1}^{n}\sum_{j=1}^{\ell_i}\sum_{j'>j} m^i_jm^i_{j'}.
		\end{align}
		
		It follows that the dimension $Z$ is at most the sum of \eqref{eqn:1} and \eqref{eqn:2}, namely
		$$\dim(Z)\leq r^2(g-1)-(r-1)(g-2)-g+ (r^2-1)(g-1)+2\sum_{i=1}^{n}\sum_{j=1}^{\ell_i}\sum_{j'>j} m^i_jm^i_{j'}\,.$$
		
		Since we have (cf. \eqref{eqn:dimension-parabolic-moduli})
		$$\mathrm{dim} \left(\mathcal{M}^{\boldsymbol{m,\alpha}}_{pc}(r,\xi)\right)= 2(r^2-1)(g-1)+2\sum_{i=1}^{n}\sum_{j=1}^{\ell_i}\sum_{j'>j} m^i_jm^i_{j'}\,,$$ we get
		\begin{align}
			\mathrm{dim}\left(\mathcal{M}^{\boldsymbol{m,\alpha}}_{pc}(r,\xi)\right)-\dim Z &\geq (r^2-1)(g-1)-r^2(g-1)+(r-1)(g-2)+g \nonumber \\
			&= (r-1)(g-2) + 1\,.
		\end{align}
		Our claim thus follows.
	\end{proof}
	
	\begin{lemma}\label{lem:torsor}
		Let $\mathcal{N}^{\boldsymbol{m,\alpha}}\left(r,\xi\right)$ denotes the moduli space of stable parabolic vector bundles on $X$ of rank $r$, determinant $\xi$ and parabolic data $(\boldsymbol{m,\alpha})$. Consider the forgetful morphism
		\begin{align}\label{eqn:forgetful-morphism}
			\pi: \,&\mc{U}\,\xrightarrow{\qquad\quad}\, \mathcal{N}^{\boldsymbol{m,\alpha}}\left(r,\xi\right)\\
			&(E_*,\nabla)\mapsto E_*\nonumber
		\end{align}
		Then $\mc{U}$ is a $T^*\mathcal{N}^{\boldsymbol{m,\alpha}}\left(r,\xi\right)$-torsor over $\mathcal{N}^{\boldsymbol{m,\alpha}}\left(r,\xi\right)$ via $\pi$.
	\end{lemma}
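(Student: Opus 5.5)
The plan is to show three things: each fiber of $\pi$ is nonempty, the group of global sections of $T^*\mathcal{N}^{\boldsymbol{m,\alpha}}(r,\xi)$ acts on $\mc{U}$ over $\mathcal{N}^{\boldsymbol{m,\alpha}}(r,\xi)$, and this action is simply transitive fiberwise and compatible in families. The first step is the deformation-theoretic identification of the cotangent bundle. For a stable parabolic bundle $E_*$ with determinant $\xi$, the tangent space is
\[
T_{E_*}\mathcal{N}^{\boldsymbol{m,\alpha}}(r,\xi)\simeq H^1(X,\mathrm{ParEnd}'(E_*)).
\]
Parabolic Serre duality, as used in the proof of Lemma \ref{L:dim-conn}, then gives
\[
T^*_{E_*}\mathcal{N}^{\boldsymbol{m,\alpha}}(r,\xi)\simeq H^0\bigl(X,\Omega^1_X(S)\otimes \mathrm{SParEnd}'(E_*)\bigr).
\]
This identification holds in families over $\mathcal{N}^{\boldsymbol{m,\alpha}}(r,\xi)$. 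One works étale locally, or on the parameter scheme of the GIT construction, where a universal parabolic bundle exists. The $\mathrm{PGL}$-ambiguity of that family acts trivially on the endomorphism sheaves $\mathrm{ParEnd}'$ and $\mathrm{SParEnd}'$, so these descend.

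Next I define the action and check it fiber by fiber. If $D$ is a parabolic connection on $E_*$ and $\theta\in H^0(X,\Omega^1_X(S)\otimes\mathrm{SParEnd}'(E_*))$, then $D+\theta$ is again a logarithmic connection. Its residue at $x_i$ is $\mathrm{Res}_{x_i}D+\mathrm{Res}_{x_i}\theta$. Since $\theta$ is strongly parabolic, $\mathrm{Res}_{x_i}\theta$ maps $E^{x_i}_j$ into $E^{x_i}_{j+1}$, so the induced action on the graded pieces is unchanged and conditions (i)–(ii) of Definition \ref{def:parabolic-connections} still hold. Semisimplicity needs a small argument. Because the weights at $x_i$ are distinct, a residue that preserves the flag and acts on the graded pieces by the distinct scalars $\alpha^{x_i}_j$ is automatically semisimple. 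Since $\mathrm{tr}\,\theta=0$, we get $\det(D+\theta)=\det D+\mathrm{tr}\,\theta=\det D$, so the determinant condition is preserved. Because $E_*$ is stable, the pair $(E_*,D+\theta)$ is stable. Conversely, two parabolic connections on $E_*$ with the same determinant connection differ by exactly such a $\theta$. Finally, $\mathrm{Aut}(E_*)=\bb{C}^*$ for stable $E_*$, so distinct $\theta$ give non-isomorphic pairs. This gives a free and transitive action on each fiber $\pi^{-1}(E_*)$.

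The remaining point, which I expect to be the main obstacle, is showing that $\pi$ is surjective and that the torsor is locally trivial in the étale (or Zariski) topology. For nonemptiness I would use the parabolic Atiyah–Weil criterion (Biswas, \cite{BL}-type results). A parabolic bundle admits a parabolic connection with the prescribed residues if and only if each indecomposable direct summand has parabolic degree zero. A stable $E_*$ is indecomposable, and the normalization \eqref{eqn:degree-condition} gives it parabolic degree zero, so such a connection exists. Adjusting by a trace term then matches the determinant connection to $\nabla_\xi$; this is possible because $\mathcal{M}_{lc}(1,d,\boldsymbol{\lambda})$ is an affine bundle over $\Pic^d(X)$ modelled on $H^0(\Omega^1_X)$. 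For local triviality, the obstruction to finding a connection on a family of parabolic bundles lies in $H^1$ of the dual of the sheaf above. It vanishes on affine opens, so local sections of $\pi$ exist. This is where one must work carefully with the universal family and its descent.
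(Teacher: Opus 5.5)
Your proposal is correct and follows the same route as the paper. Surjectivity of $\pi$ comes from Biswas's parabolic Atiyah--Weil criterion (the paper cites \cite[Theorem 3.1]{B}), and each fiber $\pi^{-1}(E_*)$ is an affine space modelled on $H^0(X,\Omega^1_X(S)\otimes\mathrm{SParEnd}'(E_*))\simeq T^*_{E_*}\mathcal{N}^{\boldsymbol{m,\alpha}}(r,\xi)$ via parabolic Serre duality. You also supply details the paper omits, namely the fiberwise action checks and local triviality in families; for the latter, the vanishing on affine $T$ needs the fact that $R^1p_*\bigl(\Omega^1_{X\times T/T}(S)\otimes\mathrm{SParEnd}'(\mathcal{E}_*)\bigr)=0$, because its fibers $H^1(X,\Omega^1_X(S)\otimes\mathrm{SParEnd}'(E_t))\simeq H^0(X,\mathrm{ParEnd}'(E_t))^{\vee}$ vanish by stability.
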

	\begin{proof}
		The morphism $\pi$ is surjective by \cite[Theorem 3.1]{B}. Moreover, $\pi^{-1}(E_*)$ is an affine space modelled over $\mathrm{H}^0(X, \Omega_X^1(S) \otimes \mathrm{SParEnd}'(E_*))$, where $\mathrm{SParEnd}'(E_*)$ denotes the strongly parabolic morphisms of $E_*$ whose trace is zero. This proves our claim.
	\end{proof}
	\noindent
	\medskip
	We are now in a position to compute the Brauer group for the moduli space $\mathcal{M}_{pc}^{\boldsymbol{m,\alpha}}\left(r,\xi\right)$.
	\begin{theorem}\label{Br of Para Conn}
		If $\boldsymbol{\alpha}$ is a generic system of weights as in Definition \ref{def:generic-weights}, then $\mathrm{Br}\left(\mc{M}_{pc}^{\boldsymbol{m,\alpha}}(r,\xi)\right)$ is isomorphic to the cyclic group $\bb{Z}/\nu\bb{Z}$, where 
		\begin{align}\label{eqn:gcd}
			\nu= \gcd(r,d,m_1^1,m_2^1,\cdots,m_1^{\ell_1},\cdots,m_{n}^{\ell_n})\,.
		\end{align}
	\end{theorem}
	\begin{proof}
		Let us denote $Y:= \mathcal{N}^{\boldsymbol{m,\alpha}}\left(r,\xi\right)$ for convenience. Due to Lemma \ref{lem:torsor}, the map $\pi: \mc{U}\,\longrightarrow\, Y$ is a torsor for $T^*Y$, and thus $\mc{U}$ is an affine bundle over $Y$. It follows that the sheaf of multiplicative groups satisfy
		\begin{enumerate}[$\bullet$]
			\item $\pi_*\bb{G}_{m,\mc{U}} =\bb{G}_{m,Y}$, and
			\item $R^i\pi_*\bb{G}_{m,\mc{U}} =0$ for $i=1,2$.
		\end{enumerate}
		It follows that the five-term exact sequence associated to the spectral sequence
		\[E_2^{p,q}=H^p_{\text{\'et}}\left(Y,R^q\pi_*\bb{G}_{m,\mc{U}}\right)\,\implies\,H^{p+q}_{\text{\'et}}\left(\mc{U},\bb{G}_{m,\mc{U}}\right) \]
		takes the form 
		{\small \begin{align*}
				0\rightarrow \Pic\left(Y\right)\rightarrow\Pic(\mc{U})\rightarrow H^0_{\text{\'et}}(Y,R^1\pi_*\bb{G}_m) =0 \rightarrow H^2_{\text{\'et}}\left(Y,\pi_*\bb{G}_m\right)=\mathrm{Br}\left(Y\right)\rightarrow\mathrm{Br}(\mc{U})\rightarrow 0.
		\end{align*}}
		This immediately implies that $\Pic(\mc{U})\simeq \Pic(Y)$ and $\mathrm{Br}\left(\mc{U}\right)\simeq \mathrm{Br}(Y)$. On the other hand, Lemma \ref{Codim Para} and cohomological purity for Brauer groups \cite{C} implies
		$$\mathrm{Br}(\mc{U}) \simeq \mathrm{Br}(\mathcal{M}^{\boldsymbol{m,\alpha}}_{pc}(r,\xi)) \,.$$
		Thus, $\mathrm{Br}\left(\mathcal{M}^{\boldsymbol{m,\alpha}}_{pc}(r,\xi)\right)\simeq\mathrm{Br}(Y)$. By \cite[Theorem 1.1]{BD}, we know that $\mathrm{Br}(Y)\simeq \bb{Z}/\nu\bb{Z}\,$. This completes the proof.
	\end{proof}
	\section{Brauer group of the moduli of stable parabolic $\text{SL}(r,\bb{C})$-Higgs bundles}
	
	We now consider the case of parabolic Higgs bundles over $X$. As before, fix a parabolic data $(\boldsymbol{m,\alpha})$ of rank $r$ along a set of points $S$ in $X$. 
	\begin{definition}
		A \emph{strongly parabolic Higgs bundle} is a pair $(E_*, \Phi)$, where $E_*$ is a parabolic vector bundle together with an $\mathcal{O}_X$-linear homomorphism
		$$\Phi: E \longrightarrow E \otimes \Omega_X^1(S)$$
		such that for each $x\in S$ the map $\Phi_x$ on the fiber $E_x$ satisfies (see Definition \ref{def:parabolic-bundles})
		$$\Phi(E^x_j)\subset E^x_{j+1}\otimes \Omega_X^1(S)|_{x}$$
		for all $j$. The map $\Phi$ is called a \emph{parabolic Higgs field} on $E_*$. 
	\end{definition}
	
	A parabolic Higgs bundle $(E_*,\Phi)$ is said to be \emph{parabolic semistable} (respectively, \emph{parabolic stable}) if for every non-zero proper parabolic subbundle $F_*$ of $E_*$, which is invariant under $\Phi$, that is, $\Phi(F) \subset F \otimes \Omega_X^1(S)$, one has
	$$p\mu(F_{*}) \leq p\mu(E_*) \quad (\text{respectively,}\,\,\, p\mu(F_{*}) < p\mu(E_*))\,.$$
	
	Fix a holomorphic line bundle $\xi$ on $X$ of degree $d$. Let $\mathcal{M}^{\boldsymbol{m,\alpha}}_{Higgs}(r, \xi)$ denote the moduli space of stable strongly parabolic Higgs bundles $(E_*,\Phi)$ having parabolic data $(\boldsymbol{m,\alpha})$ along $S$, with fixed determinant $\det E \simeq \xi$ and trace$(\Phi)=0$. (see \cite{GL, Y}). It is an irreducible normal quasi-projective variety. 
	
	Let $S=\{x_1,\cdots,x_n\}$. Using the dimension expression in \cite[p. 575]{BY96} and the identity 
	\[
	\sum_{i=1}^{n}
	\left(
	r^2 - \sum_{j=1}^{\ell_i} (m^i_j)^2
	\right)
	=
	2 \sum_{i=1}^{n}
	\sum_{j=1}^{\ell_i}
	\sum_{j' > j}
	m^i_j \, m^i_{j'}\,,
	\]
	it follows that
	\begin{align}
		\dim \mc{M}^{\boldsymbol{m},\boldsymbol{\alpha}}_{Higgs}(r,\xi) = 2(r^2-1)(g-1)+2 \sum_{i=1}^{n}
		\sum_{j=1}^{\ell_i}
		\sum_{j' > j}
		m^i_j \, m^i_{j'}\,.
	\end{align}
	One can compute the Brauer group of $\mathcal{M}^{\boldsymbol{m,\alpha}}_{Higgs}(r, \xi)$ in a similar manner to Theorem \ref{Br of Para Conn}:
	\begin{theorem}\label{Br of Higgs}
		If $\boldsymbol{\alpha}$ be a generic system of weights as in Definition \ref{def:generic-weights}, then $\mathrm{Br}(\mathcal{M}^{\boldsymbol{m,\alpha}}_{Higgs}(r, \xi))$ is isomorphic to the cyclic group $\bb{Z}/\nu\bb{Z}$, where 
		$$\nu= \gcd(r,d,m_1^1,m_2^1,\cdots,m_1^{\ell_1},\cdots,m_{n}^{\ell_n})\,.$$
	\end{theorem}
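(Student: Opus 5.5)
The plan is to mirror the proof of Theorem \ref{Br of Para Conn}, with the torsor replaced by a vector bundle. Let $Y:=\mathcal{N}^{\boldsymbol{m,\alpha}}(r,\xi)$ and let $\mc{U}_H\subset\mathcal{M}^{\boldsymbol{m,\alpha}}_{Higgs}(r,\xi)$ be the open subset of pairs $(E_*,\Phi)$ whose underlying parabolic bundle $E_*$ is stable; openness follows from openness of stability in families. Let $Z_H$ be its complement.

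First, the forgetful map $\pi:\mc{U}_H\to Y$ has fibre $\mathrm{H}^0(X,\Omega^1_X(S)\otimes\mathrm{SParEnd}'(E_*))$ over $E_*$. By parabolic Serre duality this is $\mathrm{H}^1(X,\mathrm{ParEnd}'(E_*))^\vee=T^*_{E_*}Y$. Moreover, any Higgs field on a stable $E_*$ gives a stable pair. Hence $\mc{U}_H\simeq T^*Y$ is the total space of a vector bundle over $Y$. Then $\pi_*\bb{G}_m=\bb{G}_m$ and $R^1\pi_*\bb{G}_m=0$. For the Brauer part one can argue directly: the zero section $s$ satisfies $\pi\circ s=\mathrm{id}$, so $\pi^*:\mathrm{Br}(Y)\to\mathrm{Br}(\mc{U}_H)$ is injective. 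It is also surjective by homotopy invariance of the Brauer group for vector bundles over smooth varieties in characteristic $0$. Alternatively, one can use the same five-term sequence as in Theorem \ref{Br of Para Conn}. Either way $\mathrm{Br}(\mc{U}_H)\simeq\mathrm{Br}(Y)\simeq\bb{Z}/\nu\bb{Z}$ by \cite[Theorem 1.1]{BD}.

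Second, we need $\mathrm{codim}(Z_H)\ge 2$. This mirrors Lemma \ref{Codim Para}. If $E_*$ is not stable, then genericity of $\boldsymbol{\alpha}$ makes it not semistable, so it has a nontrivial Harder--Narasimhan polygon. The locus of such $E_*$ has dimension at most \eqref{eqn:1}. For fixed $E_*$, the Higgs fields modulo $\mathrm{Aut}(E_*)$ with stable pair form a space whose dimension we bound as in Lemma \ref{L:dim-conn}. The Higgs fields form the vector space $\mathrm{H}^0(\Omega^1_X(S)\otimes \mathrm{SParEnd}'(E_*))$, and stability of the pair forces the stabilizer to be scalars. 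So the bound is again $-\chi(\mathrm{ParEnd}'(E_*))$, which equals \eqref{eqn:2}. Summing gives $\dim Z_H\le \dim\mathcal{M}_{Higgs}-((r-1)(g-2)+1)$, using the dimension formula for $\mathcal{M}^{\boldsymbol{m,\alpha}}_{Higgs}(r,\xi)$ stated above. Hence the codimension is at least $2$ for $g\ge3$, $r\ge2$.

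Third, apply purity: $\mathrm{Br}(\mathcal{M}_{Higgs})\to\mathrm{Br}(\mc{U}_H)$ is an isomorphism when the complement has codimension $\ge2$. The main obstacle is here. Purity \cite{C} requires a regular ambient scheme, while $\mathcal{M}^{\boldsymbol{m,\alpha}}_{Higgs}(r,\xi)$ is only stated to be normal. I would first argue smoothness: stable strongly parabolic Higgs bundles have only scalar automorphisms. The deformation complex $\mathrm{ParEnd}'\to\mathrm{SParEnd}'\otimes\Omega^1_X(S)$ then has vanishing $\mathbb{H}^2$, because by duality it is dual to $\mathbb{H}^0$, which is the trace-free endomorphisms commuting with $\Phi$, and these are zero. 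So the moduli space is smooth, and it has the stated dimension. With smoothness in hand, purity applies and the chain of isomorphisms gives $\bb{Z}/\nu\bb{Z}$.

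If smoothness cannot be fully secured, the fallback is to restrict to the smooth locus. That locus contains $\mc{U}_H$, and its complement there still has codimension $\ge2$. One would then need to know that $\mathrm{Br}$ of the whole space agrees with that of its smooth locus, which is the weaker point of the plan.
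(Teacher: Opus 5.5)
Your proposal is correct and follows essentially the paper's argument: you identify the open locus of pairs whose underlying parabolic bundle is stable with $T^*\mathcal{N}^{\boldsymbol{m,\alpha}}(r,\xi)$, use that a vector bundle has the same Brauer group as its base, extend over a complement of codimension at least two by purity, and then quote the known isomorphism $\mathrm{Br}(\mathcal{N}^{\boldsymbol{m,\alpha}}(r,\xi))\simeq\mathbb{Z}/\nu\mathbb{Z}$. Your explicit codimension count (mirroring Lemma~\ref{Codim Para}) and your smoothness argument via $\mathbb{H}^2=0$ of the self-dual deformation complex supply details the paper only asserts (it calls the space normal yet applies purity, which needs regularity), so your fallback in the last paragraph is not needed.
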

	\begin{proof}
		Consider the moduli space $\mc{N}^{\boldsymbol{m,\alpha}}(r,\xi)$ of stable parabolic vector bundles on $X$ of rank $r$, determinant $\xi$, and parabolic data $(\boldsymbol{m,\alpha})$. Recall that $\mathrm{ParEnd}'(E_*)$ denote the space of parabolic endomorphisms of trace zero. Using this, the cotangent space of $\mc{N}^{\boldsymbol{m,\alpha}}(r,\xi)$ at $E_*$ is given by 
		$$(T_{E_{*}}\mc{N}^{\boldsymbol{m,\alpha}}(r,\xi))^{\vee} \simeq \mathrm{H}^1(X, \mathrm{ParEnd}'(E_{*}))^{\vee} \simeq \mathrm{H}^0(X,\mathrm{SParEnd}'(E_{*})\otimes \Omega_X^1(S)).$$
		In fact, the cotangent bundle $T^*\mc{N}^{\boldsymbol{m,\alpha}}(r,\xi)$ is embedded in $\mathcal{M}^{\boldsymbol{m,\alpha}}_{Higgs}(r, \xi)$ as a Zariski open subset whose complement 
		is of codimension at least two. Now, a similar reasoning as in the proof of Theorem \ref{Br of Para Conn} shows that
		$$\mathrm{Br}(\mathcal{M}^{\boldsymbol{m,\alpha}}_{Higgs}(r, \xi))\simeq \mathrm{Br}\left(T^*\mc{N}^{\boldsymbol{m,\alpha}}(r,\xi)\right) \simeq \mathrm{Br} (\mc{N}^{\boldsymbol{m,\alpha}}(r,\xi))\,.$$ 
		Since $\mathrm{Br}\left(\mathcal{N}^{\boldsymbol{m,\alpha}}(r,\xi)\right)\simeq \frac{\bb{Z}}{\nu\bb{Z}}$ \cite[Theorem 1.1]{BD}, our result follows.
	\end{proof}
	\section{Moduli of stable parabolic $\text{PGL}(r,\bb{C})$-connections}
	\subsection{Action of torsion line bundles on the moduli of parabolic connections}\hfill\\
	Let 
	\begin{align}\label{eqn:r-torsion-line-bundles}
		\Gamma := \{M\in\Pic(X)\,\mid\,M^{\otimes r}\simeq \mc{O}_X\}
	\end{align}
	denote the group of $r$-torsion line bundles on $X$. It is a finite abelian group of order $r^{2g}$. We aim to construct an action of $\Gamma$ on the moduli of parabolic stable connections $\mc{M}^{\boldsymbol{m,\alpha}}_{pc}(r,\xi)$ (see Definition \ref{def:parabolic-connections-moduli}), and study its fixed point locus.
	
	The trivial line bundle always admits a holomorphic connection, namely the differential $d: \mc{O}_X\longrightarrow \Omega^1_X$. This will be called as the \textit{trivial connection} on $\mc{O}_X$. The following result should be well-known to experts; we provide a proof here for the sake of completeness.
	\begin{lemma}\label{lem:connection-line-bundle}
		Let $L$ a holomorphic line bundle on $X$ satisfying $L^{\otimes r}\simeq \mc{O}_X$. There exists a unique holomorphic connection $D_L$ on $L$ such that the induced connection $D_L^{\otimes r}$ on $L^{\otimes r}$ coincides with the trivial connection $d$ on $\mc{O}_X$ under the identification. 
	\end{lemma}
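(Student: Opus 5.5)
Fix a trivialization $\psi: L^{\otimes r}\xrightarrow{\simeq}\mc{O}_X$. For existence, I will construct $D_L$ locally and glue. Choose an open cover $\{U_a\}$ of $X$ with nowhere vanishing sections $s_a$ of $L$ on $U_a$. Then $f_a := \psi(s_a^{\otimes r})$ is a nowhere vanishing holomorphic function on $U_a$. After shrinking the $U_a$ to simply connected opens (discs), each $f_a$ has an $r$-th root $h_a$. Replacing $s_a$ by $h_a^{-1}s_a$, I may assume $\psi(s_a^{\otimes r})=1$. On $U_a$ define $D_L(g s_a)= dg\otimes s_a$.

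On an overlap $U_a\cap U_b$ write $s_b=c_{ab}s_a$. Then $c_{ab}^r=1$, so $c_{ab}$ is a locally constant $r$-th root of unity. Hence
\[ dg\otimes s_b = d(gc_{ab})\otimes s_a, \]
and the local definitions agree, so they glue to a global holomorphic connection $D_L$. Its $r$-th tensor power acts on the local frames by $D_L^{\otimes r}(g\, s_a^{\otimes r}) = dg\otimes s_a^{\otimes r}$. Since $\psi(s_a^{\otimes r})=1$, this is exactly $d$ under $\psi$. Equivalently, $L$ is given by a cocycle valued in the roots of unity $\mu_r\subset\mathbb{C}^*$, and $D_L$ is the associated flat connection.

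For uniqueness, suppose $D'$ is another such connection. Then $D'-D_L=\omega\in H^0(X,\Omega^1_X)$. The induced connection on $L^{\otimes r}$ changes by $r\omega$, because $(D_L+\omega)^{\otimes r}=D_L^{\otimes r}+r\omega$ by the Leibniz rule on tensor products. Both induced connections equal $d$ under $\psi$, so $r\omega=0$ and hence $\omega=0$.

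Finally, the statement says ``under the identification'', so I need independence of the choice of $\psi$. Any other trivialization is $c\psi$ with $c\in\mathbb{C}^*$ a constant, since $X$ is compact. Multiplying by a constant does not change the connection $d$, so the condition on $D_L$ does not depend on $\psi$.

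The only delicate point is normalizing the local frames so that $\psi(s_a^{\otimes r})=1$. This needs $r$-th roots of nonvanishing holomorphic functions, which exist on simply connected opens. Everything else is formal.
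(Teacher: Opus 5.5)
Your proof is correct. Your uniqueness argument and your handling of the choice of trivialization match the paper's. The difference is in the existence step.

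The paper argues cohomologically. From $r\,c_1(L)=0$ and $H^2(X,\mathbb{Z})\simeq\mathbb{Z}$ it gets $c_1(L)=0$, so the Atiyah class vanishes, and Atiyah's criterion then says $L$ carries \emph{some} holomorphic connection $D$. The connection $D^{\otimes r}$, transferred to $\mathcal{O}_X$, differs from $d$ by a holomorphic $1$-form $\omega$. Replacing $D$ by $D-\frac{1}{r}\omega$ gives the required connection.

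You instead build $D_L$ directly. You normalize local frames so that $\psi(s_a^{\otimes r})=1$, observe that the transition functions become locally constant $r$-th roots of unity, and declare $D_L$ to be $d$ in these frames.

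What your route buys: it is self-contained and needs neither Atiyah's theorem nor the identification of the Atiyah class with the Chern class. It also shows $D_L$ explicitly as the flat connection attached to the $\mu_r$-valued cocycle of $L$.

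What the paper's route buys: it is more linear. Connections on $L$ form an affine space over $H^0(X,\Omega^1_X)$, and $D\mapsto D^{\otimes r}$ is affine with linear part multiplication by $r$. So once a single connection on $L$ is known to exist, the same argument shows that \emph{every} holomorphic connection on $L^{\otimes r}$, not only $d$, has a unique $r$-th root on $L$. Existence and uniqueness then come from one bijection.
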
 
	\begin{proof}
		For any line bundle $M$, let $at(M)\in H^1(X,\Omega^1_X)$ denote its Atiyah class. It is well-known that $M$ admits a holomorphic connection iff $at(M)=0$ \cite[Theorem 2]{A}. 
		
		Choose an isomorphism $\varphi: L^{\otimes r}\xrightarrow{\simeq} \mc{O}_X$. The trivial connection $d$ on $\mc{O}_X$ can be transferred as a  holomorphic connection on $L^{\otimes r}$ via $\varphi$. Since the Atiyah class of a line bundle coincides with its first Chern class, we get
		\begin{align}
			at(L^{\otimes r}) = c_1(L^{\otimes r}) &=0\\
			\implies r\cdot c_1(L) &= 0.
		\end{align}
		It follows that $c_1(L) = at(L) =0$, as $c_1(L)\in H^2(X,\bb{Z})\simeq\bb{Z}$. Consequently, $L$ admits a holomorphic connection, say $D$. Consider $D^{\otimes r}$ as a connection on $\mc{O}_X$ by transferring it via $\varphi$. In other words, consider the following connection $\mc{O}_X$\,:
		$$D':=(\varphi\otimes \Id_{\Omega^1_X})\circ D^{\otimes r}\circ \varphi^{-1}\,.$$ 
		Now, since the space of holomorphic connections on $\mc{O}_X$ is an affine space modeled over $H^0(X,\Omega_X^1)$, it follows that there exists a holomorphic 1-form $\omega$ on $X$ satisfying
		\begin{align}
			D' -  \omega\,=\,d\,. 
		\end{align}
		Let $\omega':= \Id_{L^{\otimes r}}\otimes \omega \in H^0(X, \,\text{End}(L^{\otimes r})\otimes \Omega_{X}^1)$, and let $D_L := D-\dfrac{1}{r}\omega'$ be the resulting connection on $L$. It follows that $$D_L^{\otimes r}=D^{\otimes r}-r\cdot\dfrac{1}{r}\omega'=D^{\otimes r}-\omega'\,,$$
		and consequently,
		\begin{align}
			(\varphi\otimes\Id_{\Omega^1_{X}})\circ D_L^{\otimes r}\circ\varphi^{-1} = (\varphi\otimes\Id_{\Omega^1_{X}})\circ (D^{\otimes r}-\omega')\circ\varphi^{-1}\, = \,D'-\omega =d.
		\end{align}
		In other words, $D_L^{\otimes r}$ coincides with the trivial connection $d$ on $\mc{O}_X$ via $\varphi$.
		
		To show uniqueness, suppose $L$ admits another connection $D_0$ satisfying $$	(\varphi\otimes\Id_{\Omega^1_{X}})\circ D_0^{\otimes r}\circ \varphi^{-1} = d.$$ Now, $D_L = D_0+(\Id_{L})\otimes\eta$ for some holomorphic $1$-form $\eta$ on $X$. It follows that 
		$$D^{\otimes r}_L = D^{\otimes r}_0 + r\cdot (\Id_{L^{\otimes r}})\otimes\eta\,, $$
		which implies $(\varphi\otimes\Id_{\Omega^1_{X}})\circ D^{\otimes r}_L\circ\varphi^{-1} =(\varphi\otimes\Id_{\Omega^1_{X}})\circ D^{\otimes r}_0\circ\varphi^{-1}+r\cdot \eta$ on $\mc{O}_X$. Since $$(\varphi\otimes\Id_{\Omega^1_{X}})\circ D^{\otimes r}_L\circ\varphi^{-1} =(\varphi\otimes\Id_{\Omega^1_{X}})\circ D^{\otimes r}_0\circ\varphi^{-1}=d\,,$$
		we obtain $\eta=0$, showing uniqueness of $D_L$. Finally, if we choose another isomorphism $\varphi'$ between $L^{\otimes r}$ and $\mc{O}_X$, then $\varphi$ and $\varphi'$ differ by a nonzero scalar. It then follows from the above calculation that $D_L$ is independent of the choice of the isomorphism $\varphi$. 
	\end{proof}
	
	Using Lemma \ref{lem:connection-line-bundle}, one can define an action of $\Gamma$ on $\mc{M}^{\boldsymbol{m,\alpha}}_{pc}(r,\xi)$ as follows:  given $L\in \Gamma$ and a stable parabolic connection $(E_*,\nabla)$, define 
	$$L\cdot(E_*,\nabla):=(E_*\otimes L,\,\nabla\otimes D_L)\,,$$ 
	where $E_*\otimes L$ is given the obvious parabolic structure coming from $E_*$, and $(\nabla\otimes D_L)$ is defined as 
	$$(\nabla\otimes D) (s\otimes t) := \nabla(s)\otimes t + s\otimes D(t)$$ for locally defined sections $s$ of $E$ and $t$ of $L$. 
	
	Let us give some details on why this indeed defines an action. First, note that $$\det(E\otimes L)\simeq \det(E)\otimes L^{\otimes r}\simeq \xi.$$
	Also, it is straightforward to see that each square in the following diagram commutes:
	\begin{align*}
		\xymatrix{
			\det(E\otimes L) \ar[rrr]^(.4){\det(\nabla\otimes D_L)}\ar[d]_{\simeq} &&& \det(E\otimes L)\otimes \Omega_X^1(D)\ar[d]^{\simeq}\\
			\det(E)\otimes L^{\otimes r} \ar[rrr]^(.4){\det(\nabla)\otimes D_L^{\otimes r}}\ar[d]_{\Id_{\det(E)}\otimes\varphi} &&& \det(E)\otimes L^{\otimes r} \otimes\Omega^1_X(D)\ar[d]^{\Id_{\det(E)} \otimes\varphi\otimes\Id_{\Omega_X^1(D)}}\\
			\det(E) \ar[rrr]^(.4){\det(\nabla)} &&& \det(E)\otimes\Omega_X^1(D)
		}
	\end{align*}
	The outermost arrows imply that  $(\det(E\otimes L),\,\det(\nabla\otimes D_L))\simeq(\det(E),\,\det(\nabla))$ as logarithmic connections. Moreover, at each $x\in S$ we have  $\text{Res}_x(D_L)=0$ due to $D_L$ being holomorphic. It follows that
	\begin{align}\label{eqn:residue-tensor-product}
		\text{Res}_x(\nabla\otimes D_L) = \text{Res}_x(\nabla)\otimes \Id_{L_x}+\Id_{E_x}\otimes\text{Res}_x(D_L) = \text{Res}_x(\nabla)\otimes \Id_{L_x}\,.
	\end{align}
	Thus, $\text{Res}_x(\nabla \otimes D_L)$ preserves each subspace $E_j^x\otimes L_x$ in the flag of $E_x\otimes L_x$. Also,  $$\text{Res}_x(\nabla\otimes D_L)\left((E^x_j\otimes L_x)/(E^x_{j+1}\otimes L_x)\right) = \alpha^x_j\cdot\Id\,.$$ 
	Thus, $(E_*\otimes L, \,\nabla\otimes D_L)$ is a parabolic connection (see Definition \ref{def:parabolic-connections}). This way, we obtain an action of $\Gamma$ on $\mc{M}^{\boldsymbol{m,\alpha}}_{pc}(r,\xi)$.
	
	\subsection{Dimension estimate of fixed point locus}\hfill\\	
	Recall the group $\Gamma$ from \eqref{eqn:r-torsion-line-bundles}. Let $L\in\Gamma$ be non-trivial. If $m=ord(L)$, choose a nowhere-vanishing section $\sigma$ of $L$. Consider the morphism $g: L \longrightarrow L^{\otimes m},\,v\mapsto v^{\otimes m}$. The \textit{spectral curve} associated to $\sigma$ is defined as the closed subscheme of $L$ given by
	\begin{align}
		Y_L := g^{-1}(\sigma(X))\,.
	\end{align}  
	Let $\gamma:Y_L\longrightarrow X$ denote the restriction of the projection $L\longrightarrow X$ to $Y_L$. It is a finite \'etale Galois morphism  of degree $m$. The isomorphism class of the
	map $\gamma$ does not depend on the choice of the section $\sigma$.
	
	Given a parabolic data $(\boldsymbol{m,\alpha})$ of rank $r$ along $S$, a method of prescribing parabolic data of rank $\frac{r}{m}$ along $\gamma^{-1}(S)$ was described in \cite[\S~3]{BC}, which shall be briefly recalled below for convenience. For simplicity, consider the case where $S\,=\,\{p\}$ is a singleton set. Thus we have a
	system of weights $(\alpha_1^p\,<\ \alpha_2^p\ <\,\cdots\,<\  \alpha^p_{\ell(p)})$ at $p\in X$. 
	The group $\textnormal{Gal}(\gamma)=\{1, \,\mu,\,\mu^2,\,\cdots,\,\mu^{m-1}\}$ acts on $\gamma^{-1}(p)$ via multiplication from $\bb{C}^*$. Fix 
	an ordering on the points of $\gamma^{-1}(p)$, say 
	\begin{align}\label{eqn:fibre}
		\gamma^{-1}(p)\ =\ \{q_1,\,q_2,\,\cdots,\,q_m\},
	\end{align}
	such that $\mu^i$ acts on $\gamma^{-1}(p)$ as the cyclic permutation sending $q_j$ to $q_{j+i}$, where the 
	subscript $(j+i)$ is to be understood modulo $m$.
	Let $\boldsymbol{P}$ denote the collection of all nonempty subsets of $\left\{\alpha^p_1, \ \alpha^p_2, \ \cdots, \ \alpha^p_{\ell(p)}\right\}$ of cardinality \textit{at most} $\frac{r}{m}$. Define  
	\begin{equation}\label{eqn:set-of-subsets}
		\boldsymbol{T}\ \ :=\ \ \boldsymbol{P}\underset{m\ \textnormal{times}}{\times \cdots\times}\boldsymbol{P}.
	\end{equation}
	For each $\textbf{t}\,\in\, \textbf{T}$, we want to describe a procedure of associating a system of weights, as 
	well as a collection of systems of multiplicities along $\gamma^{-1}(p)\,=\,\{q_1,\ q_2,\ \cdots,\ q_m\}$. 
	For each $\textbf{t}\,\in \,\textbf{T}$, adopt the notation
	\begin{equation}\label{eqn:set-of-subsets-2}
		\textbf{t} \ =\ \left(\lambda_1(\textbf{t}),\ \lambda_2(\textbf{t}), \ \cdots ,\
		\lambda_d(\textbf{t})\right),
	\end{equation}
	where each $\lambda_j(\textbf{t})\neq \emptyset$ by assumption. Clearly each $\lambda_j(\textbf{t})$ can be 
	arranged into an increasing sequence. Take $\lambda_j(\textbf{t})$ as the set of weights at $q_j$ for 
	each $1\,\leq\, j\,\leq\, d$. Thus, each $\textbf{t}\,\in\, \boldsymbol{T}$ prescribes a system of weights on 
	$\gamma^{-1}(p)$, which we shall denote by
	$\boldsymbol{\alpha}_{\textbf{t}}$.
	
	Next, for each fixed $\textbf{t}\,\in\, \textbf{T}$, denote by
	\begin{align}\label{eqn:collection-of-sequences}
		A_{\textbf{t}}
	\end{align}
	as the collection of all matrices of size $(m\times \ell(p))$ with \textit{non-negative integer} entries, written in the form
	\begin{align}\label{eqn:matrix}
		\begin{pmatrix}
			n^{q_1}_1 & n^{q_1}_2 & \cdots & \cdots& n^{q_1}_{\ell(p)} \\
			n^{q_2}_1 & n^{q_2}_2 & \cdots & \cdots& n^{q_2}_{\ell(p)} \\
			\vdots & \vdots & \vdots& \vdots &\vdots\\
			n^{q_m}_1 & n^{q_m}_2 & \cdots & \cdots& n^{q_m}_{\ell(p)} \\
		\end{pmatrix}
	\end{align}
	which further satisfy the following two conditions:
	\begin{enumerate}[(a)]
		\item \label{condition-on-multiplicity-1} The numbers in the $j$-th row, namely the
		sequence 
		$$\left(n^{q_j}_1,\, n^{q_j}_2,\,\cdots,\, n^{q_j}_{\ell(p)}\right)$$
		must satisfy the condition that for every  $1\,\leq\, k\,\leq\,\ell(p)$,
		$$(n^{q_j}_k\ =\ 0)\ \ \iff\ \ (\alpha^{p}_k\ \notin\ \lambda_j(\textbf{t})) \,\,\,(\text{see}\ \eqref{eqn:set-of-subsets-2});
		$$ 
		note that this makes sense because
		$\lambda_j(\textbf{t})\,\subset\,\{\alpha^{p}_1,\ \alpha^{p}_2,\ \cdots,\ \alpha^{p}_{\ell(p)}\}$
		by construction.
		
		\item \label{condition-on-multiplicity-2} 
			the entries in each row of the matrix must add up to $\frac{r}{m}$, while the entries
			of the $k$-th column of it must add up to $m^p_k$ for each $k\,\in\,[1,\,\ell(p)]$. In other words,
			\[\left(\sum_{k=1}^{\ell(p)} n^{q_j}_k \ = \ \frac{r}{m}\ \ \ \forall\ \ j\,\in\,[1,\,m]\right)\,\,\,
			\text{and}\,\,\,\left(\sum_{j=1}^{m} n^{q_j}_k \ = \ m^p_k\ \ \ \forall\ \ k\,\in\,[1,\,\ell(p)]\right).\]
		\end{enumerate}  
		For each $\textbf{t}\,\in\, \textbf{T}$ consider the system of weights
		$\boldsymbol{\alpha}_{\textbf{t}}$ along $\gamma^{-1}(p)$ as described just before
		\eqref{eqn:set-of-subsets}. It was shown in \cite[Lemma 3.2]{BC} that the collection $A_{\textbf{t}}$ gives rise to a system of multiplicities along $\gamma^{-1}(p)$, denoted by $\boldsymbol{n}$. Let $d\,=\,\deg(\xi)$, and suppose
		\begin{align*}
			\mc{M}^{\textbf{n},\boldsymbol{\alpha}_{\textbf{t}}}_{Y,pc}\left(\frac{r}{m},d\right)
		\end{align*}
		denote the moduli space of parabolic stable connections on $Y$ along $\gamma^{-1}(S)$ of rank $\frac{r}{m}$ and
		degree $d\,=\,\deg(\xi)$, and having parabolic data
		$(\boldsymbol{n},\,\boldsymbol{\alpha}_{\textbf{t}})$ along $\gamma^{-1}(S)$. Consider the following space for each
		$\textbf{t}\,\in\, \textbf{T}$:
		\begin{equation}\label{eqn:moduli}
			\mathcal{N}^\textbf{t}_L\ :=\ \left\{(F_*,\,\nabla)\,\in\, \coprod_{\boldsymbol{n}\,\in\,
				A_{\textbf{t}}}\mc{M}_{Y,pc}^{\boldsymbol{n},\boldsymbol{\alpha}_{\textbf{t}}}\left(\frac{r}{m},d\right)\
			\big\vert\ (\det(\gamma_*F),\,\det(\nabla))\,\simeq\, (\xi,\nabla_{\xi})\right\}.
		\end{equation}
		
		\begin{proposition}\label{prop:parabolic-fixed-point}
			Let $(\boldsymbol{m,\alpha})$ be a parabolic data  along $S$ such that $\boldsymbol{\alpha}$ is a generic system of weights as in Definition \ref{def:generic-weights}. Let $L$ be a nontrivial $r$-torsion line bundle on $X$. The parabolic push-forward along the spectral curve $\gamma:Y_L\longrightarrow X$ gives rise to a surjective morphism $$f: \left(\coprod_{\boldsymbol{t}\,\in\,\textbf{T}}\mc{N}^{\boldsymbol{t}}_L\right) \longrightarrow \mc{M}^{\boldsymbol{m,\alpha}}_{pc}(r,\xi)^L\,.$$ 
		\end{proposition}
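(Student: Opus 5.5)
We construct $f$ by parabolic push-forward along $\gamma$ and then prove that it is surjective by descent along $\gamma$. We reduce to $S=\{p\}$, since the construction is local at each parabolic point.

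\textbf{Step 1: $f$ is well defined.} Let $(F_*,\nabla)\in\mc{N}^{\boldsymbol t}_L$ with $F$ of rank $r/m$ on $Y_L$.
\begin{enumerate}[$\bullet$]
\item Because $\gamma$ is finite étale, $\gamma_*\Omega^1_{Y_L}(\gamma^{-1}(S))\simeq \Omega^1_X(S)\otimes\gamma_*\mc{O}_{Y_L}$. Hence $\gamma_*\nabla$ is a logarithmic connection on $\gamma_*F$, of rank $r$.
\item At $p$ we have $(\gamma_*F)_p=\bigoplus_{j=1}^m F_{q_j}$. The flag at $p$ is defined by taking, for each weight $\alpha^p_k$, the sum over $j$ of the pieces of $F_{q_j}$ of weight $\ge\alpha^p_k$.
\item The residue $\text{Res}_p(\gamma_*\nabla)=\bigoplus_j \text{Res}_{q_j}(\nabla)$ is then semisimple, preserves this flag, and acts by $\alpha^p_k$ on the $k$-th graded piece.
\item The column-sum condition (b) in the definition of $A_{\boldsymbol t}$ gives multiplicity $m^p_k$, and the row-sum condition gives rank $r/m$ at each $q_j$. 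So the parabolic data is $(\boldsymbol m,\boldsymbol\alpha)$, as in \cite[Lemma 3.2]{BC}.
\item The determinant condition in \eqref{eqn:moduli} ensures that the image has determinant $(\xi,\nabla_\xi)$.
\end{enumerate}

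\textbf{Step 2: $f$ lands in the $L$-fixed locus.} The projection formula gives $\gamma_*F\otimes L\simeq\gamma_*(F\otimes\gamma^*L)$. The tautological section of $\gamma^*L$ on $Y_L\subset L$ trivializes $\gamma^*L$, and this trivialization carries $\gamma^*D_L$ to the trivial connection. The reason is that the $m$-th power of the section is $\sigma$, which is flat for $D_L^{\otimes m}$ by the uniqueness in Lemma \ref{lem:connection-line-bundle}. Hence $L\cdot f(F_*,\nabla)\simeq f(F_*,\nabla)$.

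\textbf{Step 3: stability of the image.} We must show that $(\gamma_*F_*,\gamma_*\nabla)$ is stable. A $\gamma_*\nabla$-invariant subbundle $W$ is invariant under the $\gamma_*\mc{O}_{Y_L}$-module structure coming from the Galois decomposition. I would try to show that $W$ is therefore a push-forward $\gamma_*W'$ with $W'$ a $\nabla$-invariant subbundle of $F$, and that parabolic degrees satisfy $\text{pdeg}(\gamma_*W'_*)=\text{pdeg}(W'_*)$ because $\gamma$ is étale. Stability of $F$ then gives the inequality. If $W$ is not of this form, I would instead compare slopes of $W$ and its Galois translates using semistability of the pushforward, and fall back on genericity of $\boldsymbol\alpha$ together with the fact that the pushforward is polystable.

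\textbf{Step 4: surjectivity, the main step.} Let $(E_*,\nabla)$ be stable with an isomorphism $\phi:(E_*,\nabla)\xrightarrow{\simeq}(E_*\otimes L,\nabla\otimes D_L)$. Composing, $\phi^{\otimes}$ iterated $m$ times gives an automorphism of $(E_*,\nabla)$ after trivializing $L^{\otimes m}$ by $\sigma$. By stability this automorphism is a scalar, so after rescaling $\phi$ we may assume $\phi^m=\Id$. Then $\phi$ makes $E$ a module over $\bigoplus_{i=0}^{m-1}L^{-i}=\gamma_*\mc{O}_{Y_L}$, so $E=\gamma_*F$ for a sheaf $F$ of rank $r/m$ on $Y_L$. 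Because $\phi$ is flat, $\nabla$ descends to a connection $\nabla_F$ on $F$. Parabolic compatibility of $\phi$ at $p$ means that $\phi_p$ commutes with $\text{Res}_p\nabla$, so $\phi_p$ preserves the flag. The eigenspace decomposition of $\phi_p$ is $E_p=\bigoplus F_{q_j}$, and it refines the flag into weighted flags at the points $q_j$. The set of weights occurring at $q_j$ defines $\boldsymbol t$, and the multiplicities $n^{q_j}_k$ form a matrix in $A_{\boldsymbol t}$.

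Stability of $(F_*,\nabla_F)$ follows because a destabilizing invariant subbundle of $F$ pushes forward to a destabilizing invariant subbundle of $E$. I expect the main obstacle to be this step, specifically checking that the flag really splits along the eigenspaces of $\phi_p$, so that the induced data lies in $\boldsymbol T\times A_{\boldsymbol t}$.
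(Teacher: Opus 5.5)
Your route is the paper's. You push forward along the \'etale cover $\gamma$ and get $L$-invariance from the projection formula and the flat tautological trivialization of $\gamma^*L$; the paper simply cites [BH, Proposition 3.2] for this. For surjectivity you descend $(E_*,\nabla)$ along $\gamma$ using $E\simeq E\otimes L$, read off the parabolic structure of $F$ from eigenspaces at the fibres, and get stability of $F$ by pushing forward an invariant subbundle. Phrasing the descent as a module structure over $\gamma_*\mathcal{O}_{Y_L}=\bigoplus_{i=0}^{m-1}L^{-i}$, after normalizing $\phi^m=\Id$ via stability, is just the dual of the paper's eigenspace decomposition of the induced automorphism of $\gamma^*E$.

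The point you single out as the main obstacle is routine. After trivializing $L_p$, $\phi_p$ is semisimple with eigenspaces $F_{q_j}$, and it commutes with the semisimple $\mathrm{Res}_p\nabla$ because $\mathrm{Res}_p D_L=0$, so the two are simultaneously diagonalizable. Since $E^p_k$ is the sum of the residue eigenspaces for the distinct eigenvalues $\alpha^p_{k'}$ with $k'\geq k$, each $E^p_k$ splits as $\bigoplus_j (E^p_k\cap F_{q_j})$.

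The genuine flaw is the first half of Step 3. It is not true that a $\gamma_*\nabla$-invariant subbundle of $\gamma_*F$ must be a $\gamma_*\mathcal{O}_{Y_L}$-submodule. For $F=\gamma^*G$ one has $\gamma_*F\simeq\bigoplus_{i=0}^{m-1}G\otimes L^{-i}$, and the summand $G$ is invariant under the connection but is not preserved by multiplication by $L^{-1}$. Moreover, $\gamma^*G$ can be stable on $Y_L$ while $\gamma_*\gamma^*G$ is decomposable. So stability of $F$ does not by itself give stability of $\gamma_*F$. Genericity of $\boldsymbol\alpha$ is therefore not a fallback but the actual argument, exactly as in the paper: $(\gamma_*F_*,\gamma_*\nabla)$ is parabolic semistable by [BA, Proposition 6.4], and genericity upgrades this to stability.

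Semistability is in fact automatic here. Take any parabolic connection $(V_*,D)$ in the paper's sense and any $D$-invariant subbundle $W$ with its induced parabolic structure. By the residue theorem, $p\mathrm{deg}(W_*)=\deg W+\sum_{x\in S}\mathrm{tr}\,\mathrm{Res}_x(D|_W)=0$, which is the same parabolic slope as $V_*$. So what excludes proper invariant subbundles is precisely the genericity of the weights, not the stability of $F$. With Step 3 replaced by this, your proposal coincides with the paper's proof.
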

		\begin{proof}
			The map $f$ sends a stable parabolic connection $(E_*,\nabla)$ to $\left(\gamma_*E_*,\,\gamma_*\nabla\right)$. The image $\left(\gamma_*E_*,\,\gamma_*\nabla\right)$ is parabolic semistable \cite[Proposition 6.4]{BA}. Since the system of weights $\boldsymbol{\alpha}$ is chosen to be generic, $\left(\gamma_*E_*,\,\gamma_*\nabla\right)$ is also parabolic stable. Moreover, by \cite[Proposition 3.2]{BH}, $\left(\gamma_*E_*,\,\gamma_*\nabla\right)$ is a fixed point for the action of $L\in \Gamma$. This shows that the map $f$ is well-defined. 
			
			To check surjectivity of $f$, consider a fixed point $(E_*,\nabla)$ under the action of $L$. Consider the underlying vector bundle $E$ with the logarithmic connection $\nabla$ along $S$. By \cite[Theorem 3.1]{BH}, there exists a vector bundle $F$ of rank $\frac{r}{m}$ over $Y$ together with a logarithmic connection $\nabla_F$ along $\gamma^{-1}(S)$ satisfying $(\gamma_*F,\gamma_*\nabla_F)\xrightarrow{\simeq}(E,\nabla)$. 
			
			Let us briefly recall the construction of $F$. Since $(E_*,\nabla)$ is a fixed point under the action of $L$, there exists an isomorphism $\varphi:E\xrightarrow{\,\,\simeq\,\,} E\otimes L$. The line bundle $\gamma^*L$ admits a tautological trivialization $\theta:\gamma^*L\xrightarrow{\,\,\simeq\,\,}\mc{O}_{Y_{L}}$. Composing the pullback $\gamma^*\varphi$  with $\theta$ gives rise to an automorphism $\phi: \gamma^*E \xrightarrow{\,\,\simeq\,\,}\gamma^*E$. Then $\gamma^*E$ splits as a direct sum of eigenspace sub-bundles, and $F$ is obtained as one eigenspace sub-bundle of $\gamma^*E$ under $\phi$.
			Moreover, for any $x\in S$ and $y\in \gamma^{-1}(S)$, there exists a filtration of $F_y$ by eigenspaces of $\phi_y$  \cite[Proposition 3.3]{BC}. Assigning a system of weights appropriately to these filtrations induces a parabolic structure on $F_*$ along $\gamma^{-1}(S)$, such that the parabolic pushforward $\gamma_*(F_*)$ coincides with $E_*$ (see  \cite[Proposition 3.3]{BC} for more details). 
			Moreover, $F$ is preserved by $\gamma^*\nabla$ by construction, and the logarithmic connection $\nabla_F$ is nothing but the restriction of $\gamma^*\nabla$ to $F$ \cite[Theorem 3.1]{BH}.
			
			We claim that $\nabla_F$ is a parabolic connection with respect to the induced parabolic structure on $F$ as above.  First of all, since $\gamma$ is \'etale and thus unramified, we have $\text{Res}_y(\gamma^*\nabla)=\text{Res}_x(\nabla)$ under the identification $(\gamma^*E)_y = E_x$ \cite[\S~5.2]{BA}. Consequently, $\text{Res}_y(\gamma^*\nabla)$ is a semisimple endomorphism of $(\gamma^*E)_y$.  Since $\nabla_F$ is the restriction of $\gamma^*\nabla$ to $F$, it immediately follows that $\text{Res}_y(\nabla_F)$ is a semisimple endomorphism of $F_y$. Moreover, since $\text{Res}_y(\gamma^*D_L) =0$ due to $D_L$ being holomorphic, we have 
			\[\text{Res}_y\left(\gamma^*\nabla\otimes \gamma^*D_L\right) = \text{Res}_y(\gamma^*\nabla)\otimes \Id_{(\gamma^*L)_y}.\]
			From this, and the description of the automorphism $\varphi$ of $\gamma^*E$ given above, it can be easily shown that $\text{Res}_y(\gamma^*\nabla)$ commutes with $\phi_y$. Thus, $\text{Res}_y(\gamma^*\nabla)$ preserves the parabolic filtration of $F_y$ given by eigenspaces of $\phi_y$ constructed above. Finally, it is not difficult to see from the construction of the weighted filtration of $F_y$ in \cite[Proposition 3.3]{BC} that $\nabla_F$ acts as a scalar given by the appropriate weight on the successive quotients of the filtration, arising from the fact $\text{Res}_y(\nabla_F)=\text{Res}_x(\nabla)$ and that $\nabla$ is a parabolic connection. It follows that $\nabla_F$ satisfies all conditions of Definition \ref{def:parabolic-connections}. Thus, $(F_*,\, \nabla_F)$ is a parabolic connection on $Y$ of rank $\frac{r}{m}$ satisfying $$(\gamma_*F_*,\,\gamma_*\nabla_F)\xrightarrow{\,\,\simeq\,\,}(E_*,\nabla)\,.$$
			Moreover, by \cite[Theorem 1.1 (2)]{BA}, $(F_*,\nabla_F)$ is parabolic semistable, and from the parabolic stability of $(E_*,\nabla)$ it follows that $(F_*,\nabla_F)$ must be parabolic stable as well, since otherwise the existence of a sub-bundle $F'\subset F$ preserved under $\nabla_F$ satisfying $p\mu(F'_*)\geq p\mu(F_*)$ would imply that $\gamma_*F'_*\subset \gamma_*F = E$ is a sub-bundle preserved under $\nabla$ which satisfies $p\mu(\gamma_*{F'_*})\geq p\mu(\gamma_*{F_*})=p\mu(E_*)$. This contradicts the parabolic stability of $(E_*,\nabla)$. Thus $f$ is surjective.
		\end{proof}
		\begin{remark}
			If the cardinality of $S$ is greater than one, we repeat this
			construction for each of the parabolic points in $S$. Say, for example, $S\,=\,\{p_1,\ p_2,\ \cdots,\ p_n\}$;
			then for each $1\,\leq\, i\,\leq\, n$, define the set $\boldsymbol{T}_i$ analogous to $\boldsymbol{T}$ in
			\eqref{eqn:set-of-subsets}, and replace $\boldsymbol{T}$ by $\boldsymbol{T}_1\times\boldsymbol{T}_2
			\times\cdots\boldsymbol{T}_n$. Then, a similar argument as in Proposition
			\ref{prop:parabolic-fixed-point} will prove the same result, using more complicated notations.
		\end{remark}
		\begin{corollary}\label{cor:codimension}
			Let $\boldsymbol{\alpha}$ be a generic system of weights. The codimension of the closed subscheme 
			\begin{align}
				Z_{\boldsymbol{\alpha}} := \bigcup_{L\in\Gamma\setminus\{\mc{O}_X\}} \mc{M}^{\boldsymbol{m,\alpha}}_{pc}(r,\xi)^L \,\subset \, \mc{M}^{\boldsymbol{m,\alpha}}_{pc}(r,\xi)
			\end{align}
			is at least three.
		\end{corollary}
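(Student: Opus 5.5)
Since $\Gamma$ is finite, $Z_{\boldsymbol{\alpha}}$ is a finite union of the fixed loci $\mc{M}^{\boldsymbol{m,\alpha}}_{pc}(r,\xi)^L$. It therefore suffices to bound $\dim \mc{M}^{\boldsymbol{m,\alpha}}_{pc}(r,\xi)^L$ for each nontrivial $L\in\Gamma$. By Proposition \ref{prop:parabolic-fixed-point}, this fixed locus is the image of the finite disjoint union $\coprod_{\mathbf{t}}\mc{N}^{\mathbf{t}}_L$. Hence its dimension is at most $\max_{\mathbf{t},\boldsymbol{n}}\dim \mc{N}^{\mathbf{t}}_L$, and we will compare this with $\dim\mc{M}^{\boldsymbol{m,\alpha}}_{pc}(r,\xi)$ from \eqref{eqn:dimension-parabolic-moduli}. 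Clearly $L$ can only fix a point if $m=\mathrm{ord}(L)$ divides $r$, so we assume $m\mid r$ and $m\geq 2$.

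\textbf{Dimension of the source.} Let $g_Y$ be the genus of $Y=Y_L$. Since $\gamma$ is étale of degree $m$, we have $g_Y-1=m(g-1)$. The parabolic points on $Y$ are $\gamma^{-1}(S)$, which has $mn$ points. By the Inaba--Saito formula applied on $Y$ in rank $r/m$,
\[\dim\mc{M}^{\boldsymbol n,\boldsymbol\alpha_{\mathbf t}}_{Y,pc}(r/m,d)=2(r/m)^2 m(g-1)+2+2\sum_{y\in\gamma^{-1}(S)}\sum_{k<k'}n^y_kn^y_{k'}.\]
The condition defining $\mc{N}^{\mathbf{t}}_L$ is $(\det\gamma_*F,\det\gamma_*\nabla)\simeq(\xi,\nabla_\xi)$. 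I would argue that this cuts down the dimension by at least $2g$, the dimension of $\mc{M}_{lc}(1,d,\boldsymbol\lambda)$. The reason is that the map $(F,\nabla_F)\mapsto(\det F,\det\nabla_F)$ to rank-one log connections on $Y$ is smooth and surjective. Composing it with the norm map to rank-one log connections on $X$ gives a map whose fibres have dimension at least $2g$ less. Alternatively, one can simply bound the fibre of the norm by $2g_Y-2g$, which suffices.

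\textbf{Comparing the two dimensions.} The result is
\[\dim\mc{N}^{\mathbf{t}}_L\le \tfrac{2r^2}{m}(g-1)+2+2\sum_{y}\sum_{k<k'}n^y_kn^y_{k'}-2g.\]
Now compare the parabolic terms. The point $x\in S$ contributes $r^2-\sum_k (m^x_k)^2$ on $X$. The $m$ points above $x$ contribute $\sum_j\bigl((r/m)^2-\sum_k(n^{q_j}_k)^2\bigr)$ on $Y$. Since $\sum_k(n^{q_j}_k)^2\ge0$, the latter is at most $r^2/m$. The former is at least $r^2-\sum_k(m^x_k)^2$, and since the column sums give $m^x_k=\sum_j n^{q_j}_k$, we get
\[\sum_k (m^x_k)^2\ \ge\ \sum_{j,k}(n^{q_j}_k)^2.\]
Hence the parabolic part on $Y$ never exceeds the parabolic part on $X$: $Y$'s sum is $r^2/m\cdot$(stuff) minus $\sum_{j,k}(n^{q_j}_k)^2$, while $X$'s is $r^2-\sum_k(m^x_k)^2$, and $r^2/m\le r^2$. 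Putting this together,
\[\operatorname{codim}\ \ge\ 2(r^2-1)(g-1)-\tfrac{2r^2}{m}(g-1)-2+2g\ \ge\ (r^2-2)(g-1)+2g-2\cdot\ldots\]
Using $m\ge2$, $r\ge2$ and $g\ge3$, this is comfortably at least $3$; for $r=m=2$ it gives $2(g-1)-2+2g\ge 6$.

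\textbf{Main obstacle.} The delicate step is the parabolic comparison. One must genuinely prove the inequality
\[\sum_j\Bigl((r/m)^2-\sum_k (n^{q_j}_k)^2\Bigr)\ \le\ r^2-\sum_k(m^x_k)^2,\]
which I would establish by expanding $(m^x_k)^2=\bigl(\sum_j n^{q_j}_k\bigr)^2$ and using the row-sum condition (b). The other delicate point is justifying the $2g$ reduction from the determinant condition. If that is problematic, I would fall back on dropping it entirely. The margin $2(r^2-1)(g-1)-\frac{2r^2}{m}(g-1)-2\ge (r^2-2)(g-1)-2\ge 2$ is borderline in that case, so for the fallback I would check the worst case $r=2$ separately using the extra $g$ from fixing only the line bundle part.
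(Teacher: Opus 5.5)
Your route is the one the paper intends. Its proof only defers to [BC, Corollary~3.5] together with the heuristic that connection moduli have twice the dimension of bundle moduli. Your count is that computation made explicit: Inaba--Saito on $Y_L$, the $2g$ drop from the determinant condition, and the comparison of flag terms. It gives
\[
\operatorname{codim}\ \ge\ 2(g-1)\Bigl(r^2-1-\tfrac{r^2}{m}\Bigr)+2g-2\ \ge\ 4g-4\ \ge\ 8,
\]
which is exactly twice the bundle bound. Two steps need the right justification, however.

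First, the ``Hence'' in your comparison paragraph is a non sequitur. From $r^2/m\le r^2$ and $\sum_{j,k}(n^{q_j}_k)^2\le\sum_k(m^x_k)^2$ nothing follows, because the $Y$-side subtracts the smaller quantity. The inequality is nevertheless true, by the expansion you sketch at the end. Write $n_{jk}=n^{q_j}_k$ and use the row sums $\sum_k n_{jk}=r/m$:
\[
\sum_k (m^x_k)^2-\sum_{j,k}n_{jk}^2=\sum_k\sum_{j\neq j'}n_{jk}n_{j'k}\le\sum_{j\neq j'}\Bigl(\sum_k n_{jk}\Bigr)\Bigl(\sum_{k'}n_{j'k'}\Bigr)=r^2-\frac{r^2}{m}.
\]
This is precisely the required bound $\sum_j\bigl((r/m)^2-\sum_k n_{jk}^2\bigr)\le r^2-\sum_k(m^x_k)^2$.

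Second, surjectivity of the composite determinant--norm map by itself only gives a lower bound on fibre dimensions. The upper bound $\dim\mc{N}^{\mathbf t}_L\le\dim\mc{M}_{Y,pc}-2g$ comes from your alternative argument, and two facts make it work:
\begin{enumerate}
\item $\det$ on $Y$ is smooth of relative dimension $\dim\mc{M}_{Y,pc}-2g_Y$;
\item every fibre of the norm map on rank-one logarithmic connections has dimension $2g_Y-2g$, since that map is equivariant for the surjective homomorphism $\mathrm{Nm}$ between the groups of rank-one holomorphic connections on $Y$ and $X$.
\end{enumerate}
This step is genuinely needed. Without it, for $r=m=2$ and $g=3$ the bound drops to $2$.
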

		\begin{proof}
			The calculations for the codimension estimate are very similar to the codimension estimate considered in \cite[Corollary 3.5]{BC}. The heuristic principle here is the fact that the dimension of a space consisting of parabolic connections with a certain property like stability is \textit{twice} the dimension of the space consisting of ordinary parabolic vector bundles having the same property.  (For example, we have $\dim \mc{M}^{\boldsymbol{m,\alpha}}_{pc}(r,\xi) = 2\dim \mc{N}^{\boldsymbol{m,\alpha}}(r,\xi)$\,; see \eqref{eqn:dimension-parabolic-moduli}). 
			Apart from this distinction, all calculations remain exactly similar as in \cite[Corollary 3.5]{BC}. We omit the details.
		\end{proof}
		
		\section{Brauer group of the moduli stack of stable parabolic $\text{PGL}(r,\bb{C})$-connections}\label{section:brauer-group-moduli-stack}				
		\begin{definition}
			A \textit{stable parabolic }$\textnormal{PGL}(r,\bb{C})$-\textit{connection} is an equivalence class of stable parabolic connections, where two parabolic stable connections $(E_*,\,\nabla)$ and $(E'_*,\,\nabla')$ 
			are considered equivalent if there exists an $r$-torsion line bundle $L$ together with an isomorphism of parabolic connections $(E'_*,\,\nabla')\,\xrightarrow{\,\,\simeq\,\,}\, (E_*\otimes L,\,\nabla\otimes D_L)$ , where $D_L$ is the unique connection on $L$ constructed in Lemma \ref{lem:connection-line-bundle}.
		\end{definition}
		
		\begin{definition}\label{def:moduli-pgl-connections}
			Fix a line bundle $\xi$ on $X$. Suppose $i\in[0,r-1]$ satisfies $i\equiv d\,\,(\text{mod}\,r)$, where $d=\deg(\xi)$. The \textit{coarse moduli space of stable parabolic $\text{PGL}(r,\bb{C})$-connections on} $X$ having parabolic data $(\boldsymbol{m,\alpha})$ is defined as the quotient space 
			$$N_{pc}^{\boldsymbol{m,\alpha}}(r,i) :=\mc{M}_{pc}^{\boldsymbol{m,\alpha}}(r,\xi)/\Gamma\,.$$
		\end{definition}
		Also, denote by $\mf{N}^{\boldsymbol{m,\alpha}}_{pc}(r,i)$ the moduli \textit{stack} of stable parabolic
		$\text{PGL}(r,\bb{C})$-connections on $X$ of topological type $i$ and systems of multiplicities and
		weights $(\boldsymbol{m,\,\alpha})$.
		
		\begin{theorem}\label{thm:brauer-group-of-moduli-stack}
			Let $\boldsymbol{\alpha}$ be a generic system of weights for connections, as in Definition \ref{def:generic-weights-connections}. Let $N_{pc}^{\boldsymbol{m,\alpha}}(r,i)^{sm}$ denote the smooth locus of $N_{pc}^{\boldsymbol{m,\alpha}}(r,i)$. We have
			$$\textnormal{Br}\left(\mf{N}_{pc}^{\boldsymbol{m,\alpha}}(r,i)\right)\,\ \simeq\,\
			\textnormal{Br}\left(N_{pc}^{\boldsymbol{m,\alpha}}(r,i)^{sm}\right).$$
		\end{theorem}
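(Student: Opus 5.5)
We follow the strategy of \cite{BCD23, BC}, carried over from parabolic bundles to parabolic connections. Write $M:=\mc{M}^{\boldsymbol{m,\alpha}}_{pc}(r,\xi)$. Since stable parabolic connections have only scalar automorphisms, the moduli stack of stable parabolic $\text{SL}(r,\bb{C})$-connections with fixed determinant $(\xi,\nabla_\xi)$ is a $\mu_r$-gerbe over $M$. Passing to $\text{PGL}(r,\bb{C})$ kills these scalar automorphisms and identifies objects differing by $\Gamma$. Hence $\mf{N}:=\mf{N}^{\boldsymbol{m,\alpha}}_{pc}(r,i)$ is identified with the quotient stack $[M/\Gamma]$, with $\Gamma$ acting as in the previous section. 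To justify this, I would first check that every stable parabolic $\text{PGL}(r,\bb{C})$-connection of type $i$ lifts, locally in the étale topology on families, to a stable parabolic connection with determinant $(\xi,\nabla_\xi)$. Two lifts differ by $L\in\Gamma$ together with the canonical connection $D_L$, and Lemma \ref{lem:connection-line-bundle} guarantees that $D_L$ is the unique such choice.

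Next, let $U:=M\setminus Z_{\boldsymbol{\alpha}}$ be the locus where $\Gamma$ acts freely, with $Z_{\boldsymbol{\alpha}}$ as in Corollary \ref{cor:codimension}. On $U$ the map $[U/\Gamma]\to U/\Gamma$ is an isomorphism, since the stabilizers are trivial. Moreover $U/\Gamma$ is a smooth open subset of $N:=N^{\boldsymbol{m,\alpha}}_{pc}(r,i)$, contained in $N^{sm}$. Because $M$ is smooth, so is $[M/\Gamma]$, and $[U/\Gamma]$ is an open substack whose complement $[Z_{\boldsymbol{\alpha}}/\Gamma]$ has codimension at least $3\geq 2$ by Corollary \ref{cor:codimension}. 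Cohomological purity for Brauer groups of smooth (Deligne--Mumford) stacks, as used in \cite{BC}, then gives
$$\mathrm{Br}(\mf{N})\simeq \mathrm{Br}([U/\Gamma])=\mathrm{Br}(U/\Gamma).$$

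On the coarse side, $U/\Gamma\subset N^{sm}$ is open, and its complement inside $N^{sm}$ is contained in the image of $Z_{\boldsymbol{\alpha}}$ under the finite quotient map. That image has dimension equal to $\dim Z_{\boldsymbol{\alpha}}$, so its codimension in $N$ is again at least $3$. Purity on the smooth variety $N^{sm}$ then yields $\mathrm{Br}(N^{sm})\simeq \mathrm{Br}(U/\Gamma)$, and combining the two isomorphisms proves the theorem.

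I expect the main obstacle to be the first step: identifying $\mf{N}$ with $[M/\Gamma]$ precisely as stacks, rather than only on points. For families, the canonical connection $D_L$ on torsion line bundles must be constructed in families, which uniqueness in Lemma \ref{lem:connection-line-bundle} should provide. One must also check that the generic-weights hypothesis for connections (Definition \ref{def:generic-weights-connections}) ensures stability is open and matches on both sides. I would handle this by mimicking \cite[\S~4]{BCD23} verbatim, replacing bundles with pairs throughout. A secondary point is applying purity on a stack, which I would reduce to the $\Gamma$-equivariant statement on $M$ via the Hochschild--Serre spectral sequence.
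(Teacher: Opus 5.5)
Your proposal is correct and follows essentially the same route as the paper: remove the locus $Z_{\boldsymbol{\alpha}}$ where $\Gamma$ has nontrivial stabilizers (codimension at least $3$ by Corollary \ref{cor:codimension}), use freeness of the action to identify $[U/\Gamma]$ with $U/\Gamma$, and apply purity for Brauer groups both on the smooth Deligne--Mumford stack and on $N_{pc}^{\boldsymbol{m,\alpha}}(r,i)^{sm}$. The identification $\mf{N}^{\boldsymbol{m,\alpha}}_{pc}(r,i)\simeq[M/\Gamma]$ that you flag as the main obstacle is only used implicitly in the paper, which simply treats $[U/\Gamma]$ as an open substack of $\mf{N}^{\boldsymbol{m,\alpha}}_{pc}(r,i)$, so spelling it out as you propose makes the argument more complete without changing it.
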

		\begin{proof}
			The idea of the proof is similar to  \cite[Theorem 4.2]{BC}, so we omit some details. 	
			Let $Z_{\boldsymbol{\alpha}}$ be as in Corollary \ref{cor:codimension}, and denote
			$U\,:=\,N_{pc}^{\boldsymbol{m,\alpha}}(r,i)\setminus Z_{\boldsymbol{\alpha}}$. Then $\Gamma$ acts freely on $U$. Thus the quotient variety $U/\Gamma$ is smooth. Since taking quotient by the finite group $\Gamma$ is a finite morphism and hence codimension-preserving, it follows from Corollary \ref{cor:codimension} that the complement of $U/\Gamma$ in $\left(N_{pc}^{\boldsymbol{m,\alpha}}(r,i)\right)^{sm}$ is of codimension at least $3$. For a similar reason, the complement of the open sub-stack $[U/\Gamma]$ in $\mf{N}_{pc}^{\boldsymbol{m,\alpha}}(r,i)$ is of codimension at least $3$ as well. 
			As $\mf{N}_{pc}^{\boldsymbol{m,\alpha}}(r,i)$ is a Deligne-Mumford stack, it follows that
			\begin{equation*}
				\Br\left([U/\Gamma]\right)\,\ \simeq\, \ \Br\left(\mf{N}^{\boldsymbol{m,\alpha}}_{pc}(r,i)\right)\,\,\,\text{(see \cite[Proposition 4.2]{BCD23}).}
			\end{equation*}
			Since $\Gamma$ acts freely on $U$, we have $U/\Gamma \simeq [U/\Gamma]$. Consequently,
			$\Br\left([U/\Gamma]\right)\ \simeq\
			\Br\left(U/\Gamma\right).$
			Also, as $\Gamma$ acts freely on $U$ and $U$ is smooth, the quotient $U/\Gamma$ is also
			smooth. The complement of $U/\Gamma$ in $\left(\mc{N}^{\boldsymbol{m,\alpha}}_{pc}(r,i)\right)^{sm}$
			clearly has codimension at least $3$ as well. Thus the above isomorphisms imply
			$\Br\left(\mf{N}^{\boldsymbol{m,\alpha}}_{pc}(r,i)\right)\,\simeq\,
			\Br\left(U/\Gamma\right)\, \simeq\,  \Br\left(\mc{N}^{\boldsymbol{m,\alpha}}_{pc}(r,i)^{sm}\right),$ 
			where the last isomorphism follows from cohomological purity \cite[Theorem 1.1]{C}. This completes the proof.
		\end{proof}
		\section*{Acknowledgements}
		The first-named author is supported by the SERB project number SPON/SERB/62041. The second-named author is supported by the DST-INSPIRE Faculty Fellowship (Research Grant No.: DST/INSPIRE/04/2024/001521), Ministry of Science and Technology, Government of India.

	\end{document}